\documentclass[a4paper]{article}
\usepackage{graphicx,subcaption}
\usepackage{mathtools}
\usepackage{amssymb}
\usepackage{amsmath}
\usepackage{amsthm}
\usepackage{xcolor,soul}
\usepackage{fancyhdr}
\usepackage{epstopdf}
\usepackage{hyperref}
\usepackage{soul}
\usepackage{comment}
\usepackage{tabularx}
\usepackage{booktabs}
\usepackage{lineno}

\newtheorem{theorem}{Theorem}[section]

\newtheorem{lemma}[theorem]{Lemma}

\newtheorem{proposition}[theorem]{Proposition}
\newtheorem{remark}[theorem]{Remark}

\usepackage[]{geometry}
\usepackage{tikz}

\usetikzlibrary{positioning}
\usetikzlibrary{arrows}
\usetikzlibrary{arrows.meta}
\usetikzlibrary{calc}
\usetikzlibrary{shapes}

\newcolumntype{C}{>{\centering\arraybackslash}X}

\graphicspath{ {./figures/} }

\title{Traveling fronts in a spatial epidemic model \\ with slow loss of immunity}
\author{Rossella Della Marca$^{1}$, Gabriele Grifò$^{2,3}$, Annalisa Iuorio$^{4}$, Mattia Sensi$^{5,\ast}$\\
{\scriptsize $^{1}$Department of Mathematics and Applications, University of Naples ``Federico II", via Cintia, I-80126 Naples, Italy}\\
{\scriptsize $^{2}$Istituto Nazionale di Alta Matematica ``F. Severi", Piazzale Aldo Moro 5, I-00185 Rome, Italy} \\
{\scriptsize $^{3}$Department of Mathematics and Computer Science, University of Palermo, Via Archirafi 34, I-90123 Palermo, Italy} \\
{\scriptsize $^{4}$Department of Engineering, Parthenope University of Naples, Centro Direzionale - Isola C4, 80143 Naples, Italy}\\
{\scriptsize $^{5}$Department of Mathematics, University of Trento, Via Sommarive 14, I-38123 Trento, Italy}\\
{\footnotesize $^{\ast}$Corresponding author: \texttt{mattia.sensi@unitn.it}}}
\date{\today}

\begin{document}

\maketitle

\begin{abstract}
We investigate the emergence of traveling front solutions in a spatial SIRS epidemic model with diffusion acting on the infected population. The model exhibits a natural slow-fast structure due to the presence of a small parameter governing the loss of immunity, which induces a separation of scales in the dynamics. Using a traveling wave reduction, the PDE system is transformed into a singularly perturbed system of ODEs, which we analyze within the framework of Geometric Singular Perturbation Theory. In the singular limits, we study the fast excursions governed by the layer problem, and the slow evolution close to the critical manifold. In particular, we identify an entry-exit mechanism tracking the transitions between slow and fast regimes, and derive a quantitative characterization of the entry-exit dynamics. Numerical simulations of the full system confirm the validity of the proposed geometric picture. The traveling front is shown to consist of a concatenation of local, fast, and slow segments, in agreement with the theoretical analysis. 
\\

\textbf{Keywords}: Traveling fronts, Geometric Singular Perturbation Theory, epidemic model, multiple timescale dynamics.
\end{abstract}

\section{Introduction}

Compartmental models have been a cornerstone of mathematical epidemiology since the work of Kermack and McKendrick \cite{kermack1927contribution}, describing how an infectious disease spreads by partitioning a well-mixed population into a few classes (typically, in one of its most common formulations, Susceptible, Infected and Recovered individuals) linked by a system of ordinary differential equations (ODEs) describing the flow of individuals between compartments. In the Susceptible-Infected-Recovered-Susceptible (SIRS) variant, recovered individuals eventually lose immunity and return to the susceptible class, which makes it a natural framework for diseases that do not confer permanent protection and introduces a mechanism for recurrent outbreaks.

A common feature of such systems is that the rates governing the different pathways act on widely separated time scales: the infectious period is often much shorter than the average duration of acquired immunity, so that the loss of immunity is a genuinely \emph{slow} process compared to the \emph{fast} infection and recovery dynamics. This scale separation endows the equations with a singularly perturbed structure that can be systematically exploited through Geometric Singular Perturbation Theory (GSPT) \cite{fenichel1979geometric,hek2010geometric,kuehn2015multiple,wechselberger2020geometric}, which decomposes the dynamics into a \emph{layer} (fast) problem and a \emph{reduced} (slow) problem organized around a lower-dimensional \emph{critical manifold}. This approach has proven fruitful for a range of epidemic models, including fast-slow SIR, SIRS and SIRWS systems \cite{jardon2021geometric}, SIRS models on homogeneous networks \cite{jardon2021geometric2}, SIRS models with secondary infections \cite{kaklamanos2024geometric}, and related systems in mathematical biology \cite{borsotti2026geometric}.

Well-mixed ODE models capture the temporal evolution of an epidemic, yet they fail to account for its \emph{spatial} spread; this component is crucial, for example, in the spread of avian flu, which is periodically re-introduced in farms by migratory birds that carry the disease with mild or non-existent symptoms \cite{bourouiba2011interaction,liu2008spread,rao2008modeling,si2009spatio,vaidya2012avian}. Since individual mobility plays a decisive role in propagation, spatially extended models obtained by adding diffusion terms to compartmental systems have received considerable attention, a recurring theme being the emergence of \emph{traveling waves} connecting an endemic state behind the front to a disease-free state ahead of it. These have been studied from several angles: existence and minimal speed of periodic traveling waves via fixed-point and sub/super-solution arguments \cite{wu2021periodic}; Turing-type patterns and, in cross-diffusion settings, heterogeneous steady states and spikes \cite{ahmadpoortorkamani2025spatiotemporal,chang2022sparse,li2025positive}; asymptotic localization of endemic profiles in the small-diffusion regime \cite{peng2024spatial}; and data-driven mobility approaches for specific outbreaks \cite{parra2025challenges}. Of particular relevance here, models in which one compartment moves much faster than the others -- infected individuals traveling along a line of fast diffusion \cite{berestycki2020propagation}, or an environmental transmission pathway \cite{pang2019sis} -- show how a separation of \emph{spatial} scales can dramatically enhance the invasion speed, even when the basic reproduction number is close to the epidemic threshold.

Comparatively fewer works combine the spatial (traveling-wave) and temporal (fast-slow) viewpoints, analyzing the sharp fronts of a diffusive epidemic model directly through GSPT, in the spirit of GSPT analyzes of shock-fronted traveling waves \cite{li2021shock} and related multiple-time-scale wave problems \cite{popovic2025burgers}. This is the perspective adopted here. We study a spatial SIRS model in which diffusion acts only on the infected compartment and immunity is lost on a slow time scale, controlled by a small parameter. Numerical simulations show that the model supports invasion fronts traveling at constant speed, whose profiles display a clear separation of scales: sharp transition layers in the infected component coexist with slow variations in the susceptible population. Introducing a traveling-wave coordinate reduces the partial differential equations (PDEs) to a singularly perturbed system of ODEs in non-standard form, which we analyze with GSPT: we characterize the local dynamics near the disease-free and endemic equilibria, describe the fast excursions through the layer problem and an associated invariant, and study the slow evolution near the critical manifold. We identify an \emph{entry-exit} mechanism \cite{de2016entry,kaklamanos2025entry,schecter2008exchange1,schecter2008exchange2} governing the transitions between slow and fast regimes, and derive a quantitative characterization of the corresponding \emph{exit time}. The analytical picture is validated against direct numerical simulations of the full PDE system, showing excellent agreement. From a modeling standpoint, the interplay between slow and fast dynamics translates into alternating phases of gradual adaptation of the susceptible pool and rapid outbreaks, with the immunity-loss rate setting both the duration of the quiescent phases and the spatial separation between successive outbreaks.

The remainder of the paper is organized as follows. Section~\ref{sec:model} introduces the SIRS PDE model, its non-dimensional form and preliminary numerical observations. Section~\ref{sec:analysis} contains the core of the analysis: after reducing the PDE to a traveling-wave ODE system, we study the local dynamics near the equilibria (Section~\ref{sec:lin_stab}), the fast/layer dynamics (Section~\ref{sec:fast_lim}), the slow flow and the entry-exit mechanism (Section~\ref{sec:slow_flow}), and we validate the resulting geometric picture numerically (Section~\ref{sec:num_valid}). Finally, Section~\ref{sec:discussion} discusses our findings, compares them with the planar SIRS model of \cite{jardon2021geometric}, and outlines directions for future work.

\section{PDE model}
\label{sec:model}

The SIRS structure is the minimal compartmental setting in which immunity is only \emph{temporary}: after recovery, individuals return to the susceptible class at a finite rate, so that the disease is not permanently cleared and the population can sustain recurrent outbreaks. This makes the SIRS model a natural framework for infections that do not confer lifelong protection, and it is precisely the slow return of recovered individuals to susceptibility that will supply the small parameter driving our multiscale analysis. For simplicity, we neglect vital dynamics (births and deaths): on the time scales of interest the demographic turnover is much slower than the epidemiological processes, and omitting it avoids introducing further small parameters without affecting the qualitative fast-slow structure we wish to describe. 

As for the spatial coupling, we let diffusion act on the infected compartment only. The rationale is twofold. On the modeling side, in many outbreaks it is the movement of infectious individuals (rather than of the susceptible or recovered background) that predominantly drives the geographic advance of the epidemic, so that concentrating mobility on the $I$ compartment isolates the mechanism most directly responsible for spatial invasion. On the analytical side, diffusing a single compartment keeps the traveling-wave reduction tractable while still producing the sharp fronts we are interested in, and it mirrors modeling choices adopted elsewhere in the spatial-epidemics literature, where the movement of distinct compartments is deliberately treated on different footings \cite{berestycki2020propagation,pang2019sis}. We stress that this is a simplifying assumption: incorporating diffusion of the susceptible compartment, or distinct diffusivities across compartments, is a natural direction for future work (see Section~\ref{sec:discussion}).

We start from a classical SIRS (Susceptible-Infected-Recovered-Susceptible) compartmental model without demography, described by the following system of ODEs:

$$
\begin{cases}
    S'=-\beta SI/N +\tilde{\delta} R,\\
    I'=\beta SI/N -\gamma I,\\
    R'=\gamma I -\tilde{\delta} R.
\end{cases}   
$$
Here, the total population $N$ is partitioned with respect to an ongoing epidemic into Susceptible ($S$), Infected ($I$) and Recovered ($R$) individuals. The parameters are: $\beta$, aggregated parameter representing contact rate and infectivity of the virus; $\gamma$, recovery rate, which is the inverse of the average infectivity window duration; $\tilde{\delta}$, loss of immunity rate.

Assume that population $N(0)=S(0)+I(0)+R(0)=1$; since $N'=0$ we can reduce the dimensionality of our system by writing $R(t)=1-S(t)-I(t)$ for all $t\geq 0$ and studying the evolution of $S$ and $I$ exclusively:
\begin{equation}
\begin{cases}
    S'=-\beta SI +\tilde{\delta} (1-S-I),\\
    I'=\beta SI -\gamma I.
\end{cases}   
\label{eq:SIRS_red}
\end{equation}
We focus on the case $\mathcal{R}_0:=\beta/\gamma>1$, as otherwise the disease will simply die out asymptotically.

With the aforementioned motivation in mind, the system used in numerical simulations is the following:
\begin{equation}
\begin{cases}
    \dfrac{\partial S}{\partial \tilde{t}}=-\beta SI +\tilde{\delta} (1-S-I),\\
    \dfrac{\partial I}{\partial \tilde{t}}=\beta SI -\gamma I + \tilde{D} \dfrac{\partial^2 I}{\partial x^2}.
\end{cases}   
\label{eq:SIRS_red_sp}
\end{equation}

It is beneficial to switch to a nondimensional version of system \eqref{eq:SIRS_red_sp}, namely
\begin{equation}
\begin{cases}
    \dfrac{\partial S}{\partial t}=-\mathcal{R}_0 SI +\delta (1-S-I), \medskip \\
    \dfrac{\partial I}{\partial t}=\mathcal{R}_0 SI - I + D \dfrac{\partial^2 I}{\partial x^2},
\end{cases}   
\label{eq:SIRS_red_sp_adim}
\end{equation}
where $t=\frac{\tilde{t}}{\gamma}$, $\delta=\frac{\tilde{\delta}}{\gamma}$, $D=\frac{\tilde{D}}{\gamma}$, and $0<\delta\ll 1$; i.e. we assume that loss of immunity happens at a much slower rate than the infection dynamics \cite{borsotti2026geometric,jardon2021geometric,jardon2021geometric2,kaklamanos2024geometric}.

\subsection{Preliminary numerical observations}\label{sec:prelim}

Let us consider the spatially homogeneous equilibria ($D=0$) of system \eqref{eq:SIRS_red_sp_adim}, which are given by
\begin{equation} 
\label{eq:equil}
    [\text{DFE}]: \quad (S^\ast_1, I^\ast_1) = (1,0), \qquad [\text{EE}] : \quad (S^\ast_2, I^\ast_2) = \left( \frac{1}{\mathcal{R}_0}, \frac{\delta}{1+\delta} \left( \frac{\mathcal{R}_0 - 1}{\mathcal{R}_0} \right) \right).
\end{equation}

\noindent It should be noted that the first equilibrium corresponds to the disease-free state (DFE), while the second represents an endemic state (EE) in which the infection persists in the population. From a biological perspective, in the regime $\mathcal{R}_0 > 1$, the stability properties of the DFE and EE in the spatially homogeneous dynamics suggest that 
invasion phenomena may occur once spatial effects are taken into account. In particular, one expects that a localized infection introduced in a susceptible population may spread through the domain, leading to the formation of invasion fronts connecting the DFE to an infected region in which the direction of propagation depends on the sign of the wave speed.

To investigate such a scenario, we perform numerical simulations of system \eqref{eq:SIRS_red_sp_adim} starting from localized perturbations of the DFE and the results are shown in Figure~\ref{fig:numerical_simulations}. Panel \ref{fig:numerical_simulations}(a) displays the spatial profiles of $S(x,t_{\text{end}})$ and $I(x,t_{\text{end}})$, showing the presence of a sharp transition layer between the region close to the EE and the region where the infection is absent. This panel indicates the emergence of an invasion front. Moreover, the temporal evolution of the wave speed, reported in panel \ref{fig:numerical_simulations}(b), reveals that after an initial transient period the propagation speed converges to a constant value, suggesting that the observed front travels with constant velocity. This is further confirmed by the density plots presented in panels \ref{fig:numerical_simulations}(c) and \ref{fig:numerical_simulations}(d), where the interface between infected and susceptible regions follows approximately linear trajectories in the $(t,x)$ plane, providing clear evidence of constant-speed propagation. Overall, these observations indicate that system \eqref{eq:SIRS_red_sp_adim} supports traveling front solutions of the form \[ (S,I)(x,t) = (S,I)(x - ct), \] connecting the EE to the DFE, where $c \in \mathbb{R}^+$  is the positive migration speed. Moreover, the numerical profiles suggest the presence of a clear separation of spatial scales, with a sharp transition in the infected component and a slower variation in the susceptible population. This structure hints at an underlying slow-fast mechanism, which will be exploited in the subsequent analysis using GSPT.

\begin{figure}[t!]
    \centering
    \includegraphics[width=1\linewidth]{Figure1.eps}
    \caption{Numerical simulations of system \eqref{eq:SIRS_red_sp_adim} illustrating the emergence of traveling front solutions. Parameter values: $\mathcal{R}_0=3$, $\delta=10^{-3}$, $D=10$.
    \textbf{(a)} Spatial profiles of the susceptible ($S$, red) and infected ($I$, black) populations at a fixed final time, showing the formation of a sharp invasion front. \textbf{(b)} Temporal evolution of the wave speed $c(t)$, converging to a constant asymptotic value. \textbf{(c)}, \textbf{(d)} Space-time plots of $S$ and $I$ (the latter in logarithmic scale, for visual clarity), respectively. \label{fig:numerical_simulations}}
\end{figure}

In what follows, we will reduce system \eqref{eq:SIRS_red_sp_adim} to a system of ODEs through the introduction of a traveling wave coordinate, and study it with techniques from GSPT.

\section{Existence of traveling fronts}
\label{sec:analysis}

We introduce the traveling wave coordinate $\hat{\xi}=x-ct$, where $c\in\mathbb{R}$ denotes the (\emph{a priori} unknown) migration speed of the front. Then,
$$
\dfrac{\partial^2 I}{\partial x^2}=
\dfrac{{\rm d}^2 I}{{\rm d} \hat{\xi}^2}, \quad 
\dfrac{\partial S}{\partial t}=-c\dfrac{{\rm d} S}{{\rm d} \hat{\xi}},\quad 
\dfrac{\partial I}{\partial t}=-c\dfrac{{\rm d} I}{{\rm d} \hat{\xi}}.$$
Moreover, we introduce the auxiliary variable $J=\frac{{\rm d} I}{{\rm d} \hat{\xi}}$, so that system \eqref{eq:SIRS_red_sp_adim} becomes a system of first-order ODEs, namely
\begin{equation}
\begin{cases}
    \dfrac{{\rm d} S}{{\rm d} \hat{\xi}}=\dfrac{1}{c} \left(\mathcal{R}_0 SI -\delta(1-S-I) \right),\medskip \\
    \dfrac{{\rm d} I}{{\rm d} \hat{\xi}}=J, \medskip\\
    \dfrac{{\rm d} J}{{\rm d} \hat{\xi}}=\dfrac{1}{D}\left( -cJ-\mathcal{R}_0 SI+I\right).
\end{cases}   
\label{eq:PTW_SIRS_red_sp}
\end{equation}
Recall that we assumed $0<\delta \ll 1$, whereas $c,D\in \mathcal{O}(1)$ and $1<\mathcal{R}_0\in \mathcal{O}(1)$; system \eqref{eq:PTW_SIRS_red_sp} is then a singularly perturbed system in non-standard form \cite{wechselberger2020geometric}.

To facilitate the forthcoming analysis, we reverse the traveling-wave coordinate by setting $\xi=-\hat{\xi}$. System \eqref{eq:PTW_SIRS_red_sp} then becomes
\begin{equation}
\begin{cases}
    \dfrac{{\rm d} S}{{\rm d} \xi}=\dfrac{1}{c} \left(\delta(1-S-I) -\mathcal{R}_0 SI \right), \medskip \\
    \dfrac{{\rm d} I}{{\rm d} \xi}=-J,\medskip \\
    \dfrac{{\rm d} J}{{\rm d} \xi}=\dfrac{1}{D}\left( cJ + \mathcal{R}_0 SI-I\right).
\end{cases}   
\label{eq:PTW_SIRS_red_true}
\end{equation}
We note that this change of variables reverses the orientation of the traveling-wave orbit; hence, when returning to the original coordinate, the orbit must be traversed in the opposite direction. Throughout the manuscript, we compare systems \eqref{eq:PTW_SIRS_red_sp} and \eqref{eq:PTW_SIRS_red_true} to highlight the additional difficulties arising when considering the former. The singular parameter here is the immunity-loss rate $\delta$: setting $\delta=0$ decouples the fast infection-recovery dynamics from the slow drift of the susceptible pool. Following the GSPT program, we shall analyze the two singular limits separately: the \emph{layer problem} (Section~\ref{sec:fast_lim}), obtained as $\delta\to0$, and the \emph{reduced problem} on the critical manifold (Section~\ref{sec:slow_flow}), and finally combine the resulting fast and slow segments into a global description of the front, valid for small $\delta>0$.

The biologically feasible region for system \eqref{eq:PTW_SIRS_red_true} is
\begin{equation}
\Delta := \{ (S,I,J) \in \mathbb{R}^3 \; | \; S,I\geq 0,\; S+I \leq 1  \}.
    \label{eq:biol_feas}
\end{equation}
However, note that this region is \emph{not} necessarily forward invariant; it is easy to see that, for example, the derivative of $I$ evaluated at $I=0$ can in principle have any sign.

For later reference, note that the $S$-nullcline of system \eqref{eq:PTW_SIRS_red_sp}, i.e.\ the set on which $\mathrm{d}S/\mathrm{d}\xi=0$, is given by
\begin{equation}
    I=\dfrac{\delta(1-S)}{\mathcal{R}_0 S+\delta},
    \label{eq:S_nullcline}
\end{equation}
which is a branch of a hyperbola with vertical asymptote at $S=-\delta/\mathcal{R}_0$ and horizontal asymptote at $I=-\delta/\mathcal{R}_0$; both lie outside the biologically feasible region $\Delta$ \eqref{eq:biol_feas} for $\delta,\mathcal{R}_0>0$.

Before we begin our fast-slow analysis of the model \eqref{eq:PTW_SIRS_red_true}, we focus on the local nature of its two equilibria, which are given by
\begin{equation} 
\label{eq:equil_con_J}
    [\text{DFE}]: \quad (S^\ast_1, I^\ast_1,J^*_1) = (1,0,0), \qquad [\text{EE}] : \quad (S^\ast_2, I^\ast_2, J^*_2) = \left( \frac{1}{\mathcal{R}_0}, \frac{\delta}{1+\delta} \left( \frac{\mathcal{R}_0 - 1}{\mathcal{R}_0} \right) ,0\right).
\end{equation}

Capturing the qualitative behavior observed in Section \ref{sec:prelim} via  Figure \ref{fig:numerical_simulations}, the traveling fronts considered here exhibit a clear separation of scales, with sharp transition layers and extended regions of gradual variation. This observation naturally suggests that the underlying dynamics can be decomposed into distinct regimes, corresponding to local behavior near the equilibria, fast excursions governed by the layer problem, and slow evolution along the critical manifold. This decomposition provides a natural guideline for the analysis carried out in the following subsections.

\subsection{Local dynamics near the equilibria}
\label{sec:lin_stab}

The Jacobian of \eqref{eq:PTW_SIRS_red_true} at a generic point $(S,I,J)$ is given by
\begin{equation}
    \label{eq:jacob_full}
\mathcal{J}=\begin{pmatrix}
-\dfrac{\mathcal{R}_0 I +\delta}{c} & -\dfrac{\mathcal{R}_0 S +\delta}{c} & 0\\
0 & 0 & -1 \\
\dfrac{\mathcal{R}_0 I}{D} & \dfrac{\mathcal{R}_0 S-1}{D}  &\dfrac{c}{D}
\end{pmatrix}.
\end{equation}
We explore the local stability of the DFE and the EE \eqref{eq:equil_con_J} in the two following lemmas.
\begin{lemma}\label{lem:local_stab_DFE}
Assume $\mathcal{R}_0>1$ and $c>0$. Then, for the DFE $(S_1^\ast,I_1^\ast,J_1^\ast)$ of system \eqref{eq:PTW_SIRS_red_true}, one eigenvalue is real, negative and $\mathcal{O}(\delta)$, whereas two eigenvalues of $\mathcal{J}_1^\ast$ have positive $\mathcal{O}(1)$ real part; in particular, the eigenvalues are real if $c^2>4D(\mathcal{R}_0-1)$, and complex conjugate otherwise.
\end{lemma}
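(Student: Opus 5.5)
Evaluating the Jacobian \eqref{eq:jacob_full} at the DFE $(1,0,0)$ gives
\[
\mathcal{J}_1^\ast=\begin{pmatrix}
-\dfrac{\delta}{c} & -\dfrac{\mathcal{R}_0+\delta}{c} & 0\\
0 & 0 & -1\\
0 & \dfrac{\mathcal{R}_0-1}{D} & \dfrac{c}{D}
\end{pmatrix}.
\]
The first column has a single nonzero entry, on the diagonal. Expanding the characteristic determinant along that column, the spectrum splits into two pieces. The first is the eigenvalue $\lambda_1=-\delta/c$, which is real, negative for $c>0$, and $\mathcal{O}(\delta)$; this gives the first claim at once. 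The remaining two eigenvalues are those of the lower-right $2\times 2$ block
\[
B=\begin{pmatrix} 0 & -1\\ \dfrac{\mathcal{R}_0-1}{D} & \dfrac{c}{D}\end{pmatrix}.
\]
They are exact for every $\delta$, since $\delta$ does not enter $B$.

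For $B$ we have $\operatorname{tr}B=c/D>0$ and $\det B=(\mathcal{R}_0-1)/D>0$, using $\mathcal{R}_0>1$. The characteristic polynomial is $\lambda^2-\frac{c}{D}\lambda+\frac{\mathcal{R}_0-1}{D}=0$, so
\[
\lambda_{2,3}=\frac{c\pm\sqrt{c^2-4D(\mathcal{R}_0-1)}}{2D}.
\]
Consider first the case $c^2>4D(\mathcal{R}_0-1)$. The roots are real and distinct. Their product $\det B$ and their sum $\operatorname{tr}B$ are both positive, so both roots are positive. In the complementary case the roots are complex conjugates, or a double real root at equality, with real part $c/(2D)>0$. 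In both cases the real parts are $\mathcal{O}(1)$, because $c,D,\mathcal{R}_0$ are $\mathcal{O}(1)$ and independent of $\delta$. This settles the dichotomy in the statement.

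The only delicate point is the borderline $c^2=4D(\mathcal{R}_0-1)$. There the eigenvalue is a repeated real root, which the statement's phrase ``complex conjugate otherwise'' lumps with the complex case. I would note explicitly that at equality the double root $c/(2D)$ is still positive, so the positivity claim holds throughout. Apart from this, the argument is a direct computation with no real obstacle.
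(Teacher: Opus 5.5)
Your proof is correct and is essentially the paper's argument: evaluate the Jacobian at $(1,0,0)$, read off $\lambda=-\delta/c$ from the first column, and solve the $\delta$-independent quadratic $\lambda^2-\frac{c}{D}\lambda+\frac{\mathcal{R}_0-1}{D}=0$ for the remaining pair. Your trace/determinant argument for the positivity of the real roots, and your remark that the borderline case $c^2=4D(\mathcal{R}_0-1)$ gives a positive double real root rather than a complex pair, are small refinements that the paper leaves implicit.
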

\begin{proof}Evaluating \eqref{eq:jacob_full} at the DFE \eqref{eq:equil_con_J}, we obtain
$$
\mathcal{J}^*_1=\begin{pmatrix}
-\dfrac{\delta}{c} & -\dfrac{\mathcal{R}_0+\delta}{c} & 0\\
0 & 0 & -1 \\
0 & \dfrac{\mathcal{R}_0 -1}{D} &\dfrac{c}{D}
\end{pmatrix},
$$
which has eigenvalues $\lambda_0=-\frac{\delta}{c}$ (associated to the $S$-axis) and
$$
\lambda_{1,2}=\dfrac{c\pm \sqrt{c^2-4D(\mathcal{R}_0-1)}}{2D}.
$$
The real part of both is positive, and the eigenvalues are real and positive if $c^2>4D(\mathcal{R}_0-1)$, and complex conjugate otherwise. \end{proof}
\begin{remark}
Observing the numerical behavior of the system (recall Figure \ref{fig:numerical_simulations}), we expect to be in the case of real eigenvalues. A similar condition on the parameters $c$, $D$ and $\mathcal{R}_0$ will play a role in Section \ref{sec:slow_flow}, where we explore the slow dynamics of system \eqref{eq:PTW_SIRS_red_sp}. However, this seems to be a biological constraint rather than a mathematical one: what stops us from leaving the DFE through amplified oscillations is the necessity of traveling through negative values of $I$, the fraction of infected individuals in the population, which is clearly not biologically relevant.
\end{remark}

\begin{lemma}\label{lem:local_stab_EE}
Assume $\mathcal{R}_0>1$ and $c>0$. Then, there exists $\delta_0>0$ such that, for every $0<\delta<\delta_0$, the Jacobian $\mathcal{J}_2^\ast$ at the EE $(S_2^\ast,I_2^\ast,J_2^\ast)$ has one real, positive, $\mathcal{O}(1)$ eigenvalue, and a complex-conjugate pair with negative $\mathcal{O}(\delta)$ real part.
\end{lemma}
\begin{proof}Evaluating \eqref{eq:jacob_full} at the EE \eqref{eq:equil_con_J}, we obtain
$$
\mathcal{J}^*_2=\begin{pmatrix}
-\dfrac{\mathcal{R}_0 I^*_2+\delta}{c} & -\dfrac{1+\delta}{c} & 0\\
0 & 0 & -1 \\
\dfrac{\mathcal{R}_0 I^*_2}{D} & 0 &\dfrac{c}{D}
\end{pmatrix}.
$$

Recall that $I_2^*\in \mathcal{O}(\delta)$; the determinant is given by $0<\frac{(1+\delta)\mathcal{R}_0I^*_2}{cD}\in \mathcal{O}(\delta)$, whereas the trace is $0<\frac{c}{D}-\frac{\mathcal{R}_0I^*_2+\delta}{c}\in \mathcal{O}(1)$. Hence, the product of the three eigenvalues is a real positive number of order $\delta$ whereas their sum is a positive real number of order 1. We will now show that two eigenvalues are complex conjugate with negative $\mathcal{O}(\delta)$ real part, and one is real, positive and $\mathcal{O}(1)$.

The characteristic polynomial is given by
$$
\lambda^3-\text{tr}(\mathcal{J}^*_2)\lambda^2+\frac{1}{2}[(\text{tr}(\mathcal{J}^*_2))^2-\text{tr}((\mathcal{J}^*_2)^2)]\lambda-\det(\mathcal{J}^*_2)=0
$$
where

$$
\text{tr}((\mathcal{J}^*_2)^2)=\left(\frac{\mathcal{R}_0I^*_2+\delta}{c}\right)^2+\frac{c^2}{D^2}>0
$$
so that the characteristic polynomial becomes

$$
P(\lambda)=\lambda^3-\left(\frac{c}{D}- \frac{\mathcal{R}_0I^*_2+\delta}{c} \right)\lambda^2-\dfrac{\mathcal{R}_0 I^*_2+\delta}{D}\lambda-\frac{(1+\delta)\mathcal{R}_0I^*_2}{cD}=0.
$$
Applying the Routh-Hurwitz criterion to the characteristic polynomial, the corresponding Routh array is
\[ \begin{array}{c|cc} \lambda^3 & 1 & -\frac{\mathcal{R}_0 I^*_2+\delta}{D} \\[2mm] \lambda^2 & -\frac{c^2 - D \left(\mathcal{R}_0I^*_2+\delta\right)}{cD} & -\frac{(1+\delta)\mathcal{R}_0I^*_2}{cD} \\[2mm] \lambda^1 & - \frac{\left[ c^2 - D \left(\mathcal{R}_0I^*_2+\delta\right) \right] \left[ \mathcal{R}_0I^*_2+\delta \right] + (1+\delta)D \mathcal{R}_0I^*_2}{D \left[ c^2 - D \left(\mathcal{R}_0I^*_2+\delta\right) \right]} & 0 \\[3mm] \lambda^0 & -\frac{(1+\delta)\mathcal{R}_0I^*_2}{cD} & 0 \end{array} \]
Since $c, D, I_2^*>0$ and $\mathcal R_0>1$, all the entries in the first column except the leading one are negative. Hence the first column has signs $+,-,-,-$. Therefore there is exactly one sign change and, by the Routh-Hurwitz criterion, one root with positive real part. Furthermore, since $I_2^*=\mathcal O(\delta)$, all coefficients of $P(\lambda)$ except the coefficient of $\lambda^2$ are of order $\mathcal O(\delta)$. Consequently, the unstable eigenvalue is $\mathcal O(1)$ and $\mathcal{O}(\delta)$-close to the coefficient of $\lambda^2$, while the remaining two eigenvalues lie in the left half-plane. Since their sum and product are both of order $\mathcal O(\delta)$, their real parts are negative and of order $\mathcal O(\delta)$. In order to show that these are complex conjugates, we reason as follows: since they are necessarily $\mathcal{O}(\delta)$, we can ignore the highest power $\lambda^3$ in the characteristic polynomial, and focus instead on

$$
P_2(\lambda)=-\left(\frac{c}{D}- \frac{\mathcal{R}_0I^*_2+\delta}{c} \right)\lambda^2-\dfrac{\mathcal{R}_0 I^*_2+\delta}{D}\lambda-\frac{(1+\delta)\mathcal{R}_0I^*_2}{cD}=0.
$$
For this last quadratic polynomial, we may note

$$
\Delta = \left( \dfrac{\mathcal{R}_0 I^*_2+\delta}{D}  \right)^2-4\frac{(1+\delta)\mathcal{R}_0I^*_2}{cD}\left(\frac{c}{D}- \frac{\mathcal{R}_0I^*_2+\delta}{c} \right)<0
$$
since the first term is $\mathcal{O}(\delta^2)$ and the second is $\mathcal{O}(\delta)$. This allows us to conclude. 
\end{proof}
\begin{remark}
The presence of a complex-conjugate pair of eigenvalues at the EE accounts for the \emph{damped oscillations} visible in the tail of the front (see Figure~\ref{fig:numerical_simulations}(a)): approaching the EE, the profile spirals rather than converging monotonically. The transition between real and complex eigenvalues, which is governed at the DFE by the sign of $c^2-4D(\mathcal{R}_0-1)$, is the mechanism that, in related traveling-wave problems, is referred to as a Belyakov transition in the linearization about the equilibrium (identified in \cite{carter2015fast}, as well).
\end{remark}

\subsubsection{Local dynamics for system \eqref{eq:PTW_SIRS_red_sp}}
Clearly, systems \eqref{eq:PTW_SIRS_red_sp} and \eqref{eq:PTW_SIRS_red_true} share the same equilibria. Straightforward computations, following the statements, assumptions and proofs of the two lemmas above, show that for system \eqref{eq:PTW_SIRS_red_sp} the DFE has two eigenvalues with negative $\mathcal{O}(1)$ real part and one with positive $\mathcal{O}(\delta)$ real part, whereas the EE has two complex conjugate eigenvalues with negative $\mathcal{O}(\delta)$ real part and one positive $\mathcal{O}(1)$ eigenvalue.

\subsection{Fast dynamics}
\label{sec:fast_lim}
Taking the limit $\delta\to 0$ in system \eqref{eq:PTW_SIRS_red_true}, we obtain the so-called \emph{layer problem}:
\begin{equation}
\begin{cases}
    \dfrac{{\rm d} S}{{\rm d} \xi}=-\dfrac{1}{c} \mathcal{R}_0 SI ,\medskip \\
    \dfrac{{\rm d} I}{{\rm d} \xi}=-J,\medskip \\
    \dfrac{{\rm d} J}{{\rm d} \xi}=\dfrac{1}{D}\left( cJ+\mathcal{R}_0 SI-I\right),
\end{cases}   
\label{eq:PTW_SIRS_layer}
\end{equation}
whose set of equilibria forms the \emph{critical manifold}, which will play a major role in our analysis:
\begin{equation}
\mathcal{C}_0:=\{ (S,I,J)\in \mathbb{R}_+^2\times \mathbb{R} \; | \; I=J=0 \}.
\label{eq:crit_manif}
\end{equation}
Note that, as long as $S,I>0$, the variable $S$ is strictly decreasing.

The Jacobian of \eqref{eq:PTW_SIRS_layer} is given by

$$
\mathcal{J}=\begin{pmatrix}
-\dfrac{\mathcal{R}_0 I}{c} & -\dfrac{\mathcal{R}_0 S}{c} & 0\\
0 & 0 & -1 \\
\dfrac{\mathcal{R}_0 I}{D} & \dfrac{\mathcal{R}_0 S-1}{D} &\dfrac{c}{D}
\end{pmatrix}.
$$
Evaluated on $\mathcal{C}_0$, this becomes

$$
\mathcal{J}|_{\mathcal{C}_0}=\begin{pmatrix}
0 & -\dfrac{\mathcal{R}_0 S}{c} & 0\\
0 & 0 & -1 \\
0 & \dfrac{\mathcal{R}_0 S-1}{D} &\dfrac{c}{D}
\end{pmatrix},
$$
which has eigenvalues $\lambda_0=0$ (corresponding to the $S$-axis, i.e. the critical manifold which, in this limit, is a line of equilibria) and
\begin{equation}
    \label{eq:lampm}
\lambda_{\pm} = \dfrac{c\pm\sqrt{c^2 - 4 D \mathcal{R}_0 S + 4 D}}{2D}.
\end{equation}
Since $c>0$, $\Re(\lambda_+)>0$ for any values of $S$ and $\mathcal{R}_0$. Note that the requirement that both eigenvalues $\lambda_{\pm}$ are real in the biologically feasible region $\Delta$ provides a constraint on the acceptable values of the traveling speed $c$, which is implied by the condition derived on the stability of the DFE in Section \ref{sec:lin_stab}. This will be relevant in Section \ref{sec:slow_flow}, as well. Instead, 
$$
\Re(\lambda_-) \lessgtr 0 \iff \mathcal{R}_0 S \lessgtr 1.
$$
As we shall see, near $\mathcal{C}_0$, $S$ increases. Hence, orbits traveling close to the critical manifold move from its attracting part to its repelling part, and we might observe an entry-exit phenomenon. Moreover, note that for $S\in (0,1]$ we have $\lambda_+>\lambda_-$, so even though the system has two fast variables ($I$ and $J$), we fall within the standard setting for entry-exit functions \cite{de2016entry,kaklamanos2025entry,schecter2008exchange1,schecter2008exchange2}.

Note that system \eqref{eq:PTW_SIRS_layer} possesses a constant of motion (derived through a modification of a classical constant of motion of planar SIRS systems \cite{jardon2021geometric}, and which we can apply anywhere as long as $S>0$) given by:
\begin{equation}
    \label{eq:const_of_moti}
\Gamma(S,I,J)=-\dfrac{c}{D\mathcal{R}_0} \log S + \dfrac{c}{D}(S+I)+J.
\end{equation}
This constant of motion provides a return relation from $\mathcal{C}_0$ to itself, explaining the excursion part (epidemic peaks) of all the ``loops'' around the EE; it characterizes the fast dynamics in a non-trivial way, as we explain in the following proposition.

\begin{proposition}\label{prop:fast_L}
Assume that $\mathcal{R}_0>1$. Then, there exists a unique
$L\in(0,1/\mathcal{R}_0)$ such that, for initial conditions close to $(S,0,0)$ with $S\in(1/\mathcal{R}_0,1]$, 
orbits of system \eqref{eq:PTW_SIRS_layer} tend towards the subset of $\mathcal{C}_0$ \eqref{eq:crit_manif}
 with $S\in [L,1/\mathcal{R}_0)$ as $\xi\to + \infty$. \end{proposition}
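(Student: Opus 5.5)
The plan is to use the constant of motion $\Gamma$ from \eqref{eq:const_of_moti} to link the point of $\mathcal{C}_0$ where an orbit departs with the point where it lands, and then to use monotonicity of $S$ and the layer-problem eigenvalues \eqref{eq:lampm} to show that the landing point lies in $[L,1/\mathcal{R}_0)$.

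\emph{Step 1: restriction of $\Gamma$ to $\mathcal{C}_0$.} On $\mathcal{C}_0$ we have $\Gamma(S,0,0)=\frac{c}{D}\,g(S)$ with
$$
g(S)=S-\frac{1}{\mathcal{R}_0}\log S .
$$
Since $g'(S)=1-1/(\mathcal{R}_0S)$, $g$ is strictly decreasing on $(0,1/\mathcal{R}_0)$ and strictly increasing on $(1/\mathcal{R}_0,\infty)$. Moreover $g(S)\to+\infty$ as $S\to0^+$. Hence for each $S_0\in(1/\mathcal{R}_0,1]$ there is a unique $S_\infty(S_0)\in(0,1/\mathcal{R}_0)$ with $g(S_\infty)=g(S_0)$. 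This map is decreasing in $S_0$, and $S_\infty\to1/\mathcal{R}_0$ as $S_0\to1/\mathcal{R}_0^+$. I define $L:=S_\infty(1)$, the unique root in $(0,1/\mathcal{R}_0)$ of
$$
L-\tfrac{1}{\mathcal{R}_0}\log L=1 .
$$
Existence and uniqueness of $L$ follow from the monotonicity and limits of $g$. The image of $(1/\mathcal{R}_0,1]$ under $S_\infty$ is then exactly $[L,1/\mathcal{R}_0)$.

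\emph{Step 2: landing points are determined by $\Gamma$.} Near $(S_0,0,0)$ with $\mathcal{R}_0S_0>1$, Lemma~\ref{lem:local_stab_DFE}-type reasoning (via \eqref{eq:lampm}) shows that both $\lambda_\pm$ have positive real part. So the relevant orbits are those leaving a small neighborhood of the base point, along its unstable manifold or near it, into $I>0$.

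Along such an orbit $S$ is strictly decreasing while $I>0$. Hence $S$ has a limit, and since $S$ is bounded below by $0$, the $\omega$-limit set lies in $\{I=J=0\}$; I will argue this by a LaSalle-type argument using $\int_0^\infty SI\,d\xi<\infty$. If the orbit converges to $(S^*,0,0)$, conservation of $\Gamma$ and continuity give $g(S^*)=g(S_0)+O(\text{initial offset})$. In the singular limit of starting on $\mathcal{C}_0$ (the unstable fiber), this gives $g(S^*)=g(S_0)$ exactly.

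To conclude $S^*<1/\mathcal{R}_0$, I use stability. Points of $\mathcal{C}_0$ with $S>1/\mathcal{R}_0$ have a two-dimensional unstable direction. Points with $S<1/\mathcal{R}_0$ have $\lambda_-<0$, so each carries a one-dimensional stable fiber and can be approached. Since $g$ is injective on each side of $1/\mathcal{R}_0$ and $S^*<S_0$, the landing point must be the point $S_\infty(S_0)$ on the attracting side.

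\emph{Step 3: main obstacle.} The hard part is showing that orbits actually converge to a point of $\mathcal{C}_0$, rather than escaping with $I<0$ or $J\to\pm\infty$ (the feasible region is not invariant). A single conserved quantity in three dimensions does not by itself force landing.

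My first attempt is to confine the orbit to the two-dimensional level surface $\{\Gamma=\Gamma_0\}$, which can be written as a graph $J=\Gamma_0-\frac{c}{D}(S+I)+\frac{c}{D\mathcal{R}_0}\log S$. This reduces the layer flow to a planar system in $(S,I)$: $S'=-\mathcal{R}_0SI/c$ and $I'=-J(S,I)$. I will then analyze it by phase-plane methods. Within this plane, $I$ rises to a maximum and then decays, and $S$ decreases monotonically to $S^*$.

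Where real eigenvalues are needed to avoid overshooting into $I<0$ (compare the remark after Lemma~\ref{lem:local_stab_DFE}), I will assume $c^2\ge4D(\mathcal{R}_0-1)$. This makes $\lambda_\pm$ real for all $S\in(0,1]$, so the approach to $\mathcal{C}_0$ along the stable fiber is monotone in $I$. The conclusion of the proposition only concerns the landing set, which Steps 1–2 identify as $[L,1/\mathcal{R}_0)$.
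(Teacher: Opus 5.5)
\paragraph{Comparison with the paper.} Your Step 1 is the whole of the paper's proof. The paper restricts $\Gamma$ to $\mathcal{C}_0$, uses the sign of $f'$ in \eqref{eq:entry_point} to put the second root below $1/\mathcal{R}_0$, and takes $L$ as the nontrivial root of $x-\frac{1}{\mathcal{R}_0}\log x=1$. It simply assumes that the fast orbit returns to $\mathcal{C}_0$. Your remark that $S_0\mapsto S_\infty(S_0)$ is continuous and decreasing, so the image of $(1/\mathcal{R}_0,1]$ is exactly $[L,1/\mathcal{R}_0)$, is a welcome addition.

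\paragraph{The gap.} It lies in Steps 2--3, where you try to prove that nearby orbits actually land on $\mathcal{C}_0$. This is false, so the announced phase-plane argument cannot succeed. On $\{\Gamma=\frac{c}{D}g(S_0)\}$ your planar system is $S'=-\frac{\mathcal{R}_0}{c}SI$, $I'=\frac{c}{D}\bigl(I-\phi(S)\bigr)$, with $\phi(S):=g(S_0)-g(S)$. Its Jacobian at $(S,0)$ has trace $c/D$ and determinant $(\mathcal{R}_0S-1)/D$. Hence $(S_0,0)$ is a source, but the landing point $(S_\infty,0)$ is a \emph{saddle}, with eigenvalues $\lambda_+>0>\lambda_-$ from \eqref{eq:lampm}; recall $\lambda_+>0$ on all of $\mathcal{C}_0$.

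The source emits a one-parameter family of orbits, but only the one-dimensional stable branch of the saddle lands. Neighbouring orbits are expelled along the $\lambda_+$-direction, either into $I<0$ or into the forward-invariant region $\{S<S_\infty,\ I>0\}$. There $\phi<0$ gives $I'>\frac{c}{D}I$, so $I\to+\infty$ and $S\to0$.

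Your LaSalle-type step cannot detect this. Indeed $\int_0^{\xi}SI\,d\xi=\frac{c}{\mathcal{R}_0}\bigl(S(0)-S(\xi)\bigr)$ is finite along \emph{every} orbit with $I>0$, including these unbounded ones whose $\omega$-limit set is empty. In $\mathbb{R}^3$, the orbits reaching $\mathcal{C}_0\cap\{S<1/\mathcal{R}_0\}$ form the two-dimensional union of its one-dimensional stable fibers. So the conclusion can hold only on a codimension-one set of initial conditions, not for all initial conditions near $(S_0,0,0)$.

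Your real-eigenvalue hypothesis is also aimed at the wrong place. At the landing point $\lambda_\pm$ are automatically real, since $c^2+4D(1-\mathcal{R}_0S)>0$ for $S<1/\mathcal{R}_0$. What the hypothesis really controls is how the orbit leaves the source.

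\paragraph{What can be proved.} The $\Gamma$-argument (yours and the paper's) establishes a conditional statement: an orbit on the level set of $(S_0,0,0)$ that returns to $\mathcal{C}_0$ through $I>0$ must land at $S_\infty(S_0)\in[L,1/\mathcal{R}_0)$. Existence of such an orbit needs a separate argument, and this is where $c^2\ge 4D(\mathcal{R}_0S_0-1)$ enters.

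Run time backward on the lens $T:=\{S_\infty<S<S_0,\ 0<I<2\phi(S)\}$.
\begin{itemize}
\item On $I=0$ one has $I'=-\frac{c}{D}\phi<0$.
\item On $I=2\phi(S)$ one has $\frac{d}{d\xi}\bigl(I-2\phi(S)\bigr)=\phi(S)\bigl[\frac{c}{D}+\frac{4(1-\mathcal{R}_0S)}{c}\bigr]>0$, because $c^2\ge4D(\mathcal{R}_0S_0-1)>4D(\mathcal{R}_0S-1)$ for $S<S_0$.
\end{itemize}
So the forward flow leaves $T$ across its boundary, and $T$ is invariant for the reversed flow.

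The stable branch of the saddle starts inside $T$: $I$ decreases along it, so it lies below the nullcline $I=\phi$. In $T$ the variable $S$ is monotone, and the only equilibrium in $\overline{T}\cap\{S>S_\infty\}$ is $(S_0,0)$. Hence that branch converges to $(S_0,0)$ in backward time.

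This gives, for every $S_0\in(1/\mathcal{R}_0,1]$, a fast connection from $(S_0,0,0)$ to $(S_\infty(S_0),0,0)$ with $I>0$ and $S$ strictly decreasing. These connecting orbits are the precise sense in which the proposition can hold.
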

\begin{proof}
Recall that $S$ is decreasing in system \eqref{eq:PTW_SIRS_layer} as long as $S,I>0$. Orbits of said system will then traverse level curves of $\Gamma$ while decreasing the corresponding value of $S$. Consider $S_1 \in (1/\mathcal{R}_0,1]$; here, $S_1$ denotes the exit point from which the fast loop around the EE departs. The return point to $\mathcal{C}_0$ will be given by the non-trivial zero of 

$$
\hat{f}(x)= -\dfrac{c}{D\mathcal{R}_0} \log x + \dfrac{c}{D}x - \left( -\dfrac{c}{D\mathcal{R}_0} \log S_1 + \dfrac{c}{D}S_1  \right).
$$
For simplicity, we multiply the function by $D/c$, showing that the fast dynamics are completely dictated by the parameter $\mathcal{R}_0$:
\begin{equation}
 f(x)= -\dfrac{1}{\mathcal{R}_0} \log x + x - \left( -\dfrac{1}{\mathcal{R}_0} \log S_1 + S_1  \right).   
 \label{eq:entry_point}
\end{equation}
Clearly, $f(S_1)=0$. We cannot explicitly determine the second root of $f$, but we can reason as follows.
Note that
$$
f'(x)>0 \iff x>\dfrac{1}{\mathcal{R}_0},
$$
meaning that the non-trivial zero (the return point, which we denote by $S_\infty$) will satisfy $S< 1/\mathcal{R}_0$. See Figure \ref{fig:sketch} for a visualization of the concatenation of fast and slow dynamics; the latter will be discussed in the next section.

\begin{figure}[t!]
    \centering
\begin{tikzpicture}
 \node at (0,0) {\includegraphics[width=.45\textwidth]{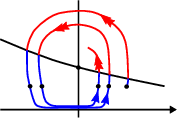}}; 
 \node at (3.8,-2) {$S$};
 \node at (-0.8,-0.6) {EE};
\node at (-0.3,-2.6) {$1/\mathcal{R}_0$}; 
\end{tikzpicture}
    \caption{Qualitative sketch of the dynamics, including fast loops and slow passages close to the critical manifold $\mathcal{C}_0$ (which will be studied in detail in Section \ref{sec:slow_flow}); the nullcline for $S$ is given by \eqref{eq:S_nullcline}. Compare with Figure \ref{fig:scales} for a quantitative representation of the orbits of system \eqref{eq:PTW_SIRS_red_true}. 
    \label{fig:sketch}}
\end{figure}

Since the orbit under study tends asymptotically to the DFE $(1,0,0)$ (as $\xi \to -\infty$), we want to find the point $(L,0,0)$ that connects to it. In order to do so, we need to solve 
$$
f(x)= -\dfrac{1}{\mathcal{R}_0} \log x + x-1=0.
$$
Since $\mathcal{R}_0>1$, the logarithm is ``squished'' towards the $x$-axis, bringing the intersection of its graph with the line passing through (1,0) (representing the DFE) closer to 0. Equivalently, we may look at
$$
- \log x + \mathcal{R}_0(x-1)=0,
$$
which shows that the line is secant to the graph of the logarithm (again, we are assuming $\mathcal{R}_0>1$). 
\end{proof}

Proposition \ref{prop:fast_L} completely characterizes the dynamics of system \eqref{eq:PTW_SIRS_layer}; we will now investigate the behavior of the original perturbed system \eqref{eq:PTW_SIRS_red_true} close to its critical manifold $\mathcal{C}_0$ \eqref{eq:crit_manif}.

\subsubsection{Fast dynamics for system \eqref{eq:PTW_SIRS_red_sp}}

Clearly, taking $\delta\to 0$ in system \eqref{eq:PTW_SIRS_red_sp} would give us a system of ODEs that share with system \eqref{eq:PTW_SIRS_layer} both the critical manifold \eqref{eq:crit_manif} and the constant of motion \eqref{eq:const_of_moti}. Here, $S$ would be increasing, and the stability of the critical manifold would be inverted (attracting for $S\in (1/\mathcal{R}_0,1]$, repelling for $S\in [0,1/\mathcal{R}_0)$).

However, in this case, by continuity and monotonicity, the value $L\in (0,1/\mathcal{R}_0)$ provides a lower bound for the acceptable initial conditions of the fast dynamics; for $S_1<L$, the orbits would exit the biologically feasible region $\Delta$ \eqref{eq:biol_feas} before returning (asymptotically) to $\mathcal{C}_0$ \eqref{eq:crit_manif}. Hence, not all initial conditions close to the intersection of the biologically relevant region and the critical manifold would result in orbits which remain in the biologically relevant region as the system evolves.

\subsection{Slow flow and entry-exit mechanism}
\label{sec:slow_flow}

We now want to analyze how the dynamics evolve close to the critical manifold $\mathcal{C}_0$ \eqref{eq:crit_manif}.

In order to do so, we ``zoom'' close to $\mathcal{C}_0$ by introducing the rescaling $\delta v:=I$, $\delta w:=J$; this rescaling, which is only meaningful in an $\mathcal{O}(\delta^n)$-neighborhood of $\mathcal{C}_0$, with $n\geq 1$, brings system \eqref{eq:PTW_SIRS_red_true} to
\begin{equation}
\begin{cases}
    \dfrac{{\rm d} S}{{\rm d} \xi}=\dfrac{\delta}{c} \left((1-S-\delta v) -\mathcal{R}_0 Sv \right),\medskip\\
    \dfrac{{\rm d} v}{{\rm d} \xi}=-w,\medskip\\
    \dfrac{{\rm d} w}{{\rm d} \xi}=\dfrac{1}{D}\left( cw+\mathcal{R}_0 Sv-v\right).
\end{cases}   
\label{eq:PTW_SIRS_resc_loc}
\end{equation}
Note that, in these coordinates, $\mathcal{C}_0$ is given by $v=w=0$. After introducing the slow variable $\zeta:=\delta \xi$, the dynamics on $\mathcal{C}_0$ are described by the equation
\begin{equation}\label{eq:lenta_S}
    \dfrac{{\rm d} S}{{\rm d} \zeta}=\dfrac{1}{c} (1-S),
\end{equation}
which can be solved explicitly (recall that the initial value is approximated up to $\mathcal{O}(\delta)$ by $S_\infty$ defined in the previous section)
\begin{equation}\label{eq:sol_lenta_S}
S(\zeta)=1+(S_\infty-1) e^{-\zeta/c}.
\end{equation}
This implies that, as $\zeta \to +\infty$, the susceptible population $S\to 1$. 

We remark that system \eqref{eq:PTW_SIRS_resc_loc} is in standard GSPT form; in particular, it is in a form to which we can directly apply the so-called \emph{entry-exit function} \cite{de2016entry,kaklamanos2025entry,schecter2008exchange1,schecter2008exchange2}. Let us briefly recall, in informal terms, the mechanism captured by this function; for a more detailed explanation, we refer the interested reader to the references listed a few lines above. Keep in mind Figure \ref{fig:entry-exit} for a qualitative representation of this Poincaré map, as a visual support for the forthcoming description. Consider a trajectory that, following a fast excursion described in Section \ref{sec:fast_lim}, lands in a small neighborhood of an \emph{attracting} portion of the critical manifold, i.e.\ a portion along which the relevant fast eigenvalue is negative. Under the slow flow, the trajectory drifts along the manifold and is exponentially contracted towards it; however, as it crosses a non-hyperbolic point (here, the point $S=1/\mathcal{R}_0$, where the eigenvalue $\lambda_-$ \eqref{eq:lampm} changes sign) the manifold turns from attracting into repelling. The trajectory does not leave immediately: because of the contraction accumulated on the stable side, it remains close to the (now unstable) manifold for a further $\mathcal{O}(1)$ amount of slow time, until the accumulated expansion exactly balances the previous contraction. Only then does it depart, triggering a new fast excursion. The entry-exit function is precisely the relation that maps the entry point of this passage to the corresponding exit point, or equivalently determines the exit \emph{time} at which the trajectory leaves the neighborhood of the manifold. This phenomenon, known in the literature also as \emph{delayed loss of stability} or \emph{bifurcation delay}, is quantified by balancing the signed accumulated rate $\lambda_-$ along the slow trajectory, which is exactly the content of the integral condition \eqref{eq:exit_time} below. In our epidemic setting it has a transparent interpretation: it measures the time (and hence the spatial extent, in the traveling front setting of this work) over which the infected population stays negligibly small between two successive outbreaks. 

\begin{figure}[t!]
\centering
  \begin{tikzpicture}
 \node at (0,0) {\includegraphics[width=.45\linewidth]{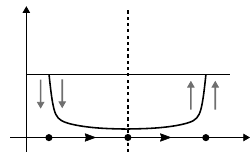}};
 \node at (2.8,0.3) {$r=\Tilde{r}$}; 
 \node at (3.5,-2.1) {$S$};
 \node at (-3.2,2) {$v,w$};
  \node at (2.4,-2.1) {$S_E$};
  \node at (-2.15,-2.1) {$S_\infty$}; 
\node at (0.2,-2.5) {$1/\mathcal{R}_0$}; 
\end{tikzpicture}
\caption{Visualization of the entry–exit map on the set $\{r=\Tilde{r}\}$, which indicates a (small) radial distance from the $S$-axis, illustrating delayed loss of stability. The point $S=1/\mathcal{R}_0$ is the unique non-hyperbolic point on the critical manifold. The point $S=S_E$ corresponds to the position of $S(\zeta)$ as it evolved from $\zeta=0$ to $\zeta=T_E$, with $T_E$ being the exit time discussed in this section. Refer to Section \ref{sec:num_valid} for numerical implementation of this function, and comparison with the simulations provided in Section \ref{sec:prelim}. \label{fig:entry-exit}}\end{figure}

To be more precise, we are able to estimate the permanence of the dynamics in a cylindrical neighborhood of $\mathcal{C}_0$, rather than a coordinate relation between an entry point in the slow flow (approximated by $S_\infty$) and the corresponding exit point. Moreover, the eigendirection corresponding to $\lambda_-$ does not coincide with either $I$ or $J$.

We can obtain the exit \emph{time} by integrating 
$\lambda_-$ along solutions of the slow flow for $S$  \eqref{eq:sol_lenta_S} and finding the non-trivial zero of the corresponding integral equation
\begin{equation}
\int_{0}^{T_E} \left(c-\sqrt{c^2 - 4 D \mathcal{R}_0 S(\zeta) + 4 D}\right) \text{d}\zeta=0.
\label{eq:exit_time}
\end{equation}

By introducing, for ease of notation, $a:= c^{2}-4D(\mathcal{R}_0-1)$ and $B:= 4D\mathcal{R}_0(S_\infty-1)<0 $, we can re-write the argument of the square root as $\rho(S(\zeta))=a-B\,e^{-\zeta/c}$. Then, the integral equation for the exit time can be rewritten as the non-trivial zero of 
\begin{equation}\label{eq:G1}
  G(T):=\int_0^{T}\left(c-\sqrt{\rho\left(S(\zeta)\right)}\right)\text{d}\zeta
        =\int_0^{T}\left(c-\sqrt{a-B\,e^{-\zeta/c}}\right)\text{d}\zeta .
\end{equation}
From Section \ref{sec:fast_lim} we know that the integrand of \eqref{eq:G1} is negative for $S<1/\mathcal{R}_0$ and positive for $S>1/\mathcal{R}_0$; hence, $G(0)=0$, $G$ decreases until
$T=T_c$ (where $S(T_c)=1/\mathcal{R}_0$) and then increases. Since $S(\zeta)\to 1$ as $\zeta\to+\infty$, we have that in the same limit $G(T)\to+\infty$. In particular, a zero $T_E$ of $G(T)$ satisfying
$T_E>T_c$ always exists. The exit point will satisfy $S_E \in (1/\mathcal{R}_0, 1)$. The unique positive root of \eqref{eq:exit_time} can be found numerically and compared to the simulations of system \eqref{eq:PTW_SIRS_red_true}, as we will do in Section \ref{sec:num_valid}. 

Note that, by continuity and monotonicity, orbits starting from \emph{larger} values of $S_\infty \in (0, 1/\mathcal{R}_0)$ will necessarily exit from \emph{smaller} values of $S_E \in (1/\mathcal{R}_0,1)$. Combining this with the fact that smaller values of $S_E$ correspond to smaller values of the constant of motion $\Gamma$, we see that subsequent combination of fast and slow dynamics will bring the orbit closer and closer to the EE. We provide a more rigorous explanation in the following section.

\subsection{Singular skeleton and candidate orbit}
\label{sec:singular_full}

The results obtained in Sections \ref{sec:lin_stab}-\ref{sec:slow_flow} provide a complete geometric description of the singular dynamics underlying the traveling front emerging in system \eqref{eq:PTW_SIRS_red_true}. In particular, the local analysis near the equilibria identifies the asymptotic behavior at the endpoints of the front, the layer problem determines the fast outbreak excursions through the invariant $\Gamma$, and the entry-exit mechanism describes the slow drift along the critical manifold. Combining these ingredients yields a singular orbit composed of alternating fast and slow segments entering a neighborhood of the EE where the local dynamics take over (see Figure \ref{fig:qualitative}). This singular geometric trajectory will serve as the skeleton of the traveling front and is summarized in the following proposition.

\begin{proposition}[Singular front structure] \label{prop:singular_front} 
Let $\mathcal{R}_0>1$ and $c,D>0$. In the singular limit $\delta\to 0$, there exists at least one migration speed $c > 2 \sqrt{D(\mathcal{R}_0-1)}$ that gives rise to a singular front orbit $\Phi_0$ connecting the DFE to the EE, corresponding to a singular solution of \eqref{eq:SIRS_red_sp_adim}. The singular front $\Phi_0$ is obtained by concatenating orbit segments of the reduced and layer problems associated with \eqref{eq:PTW_SIRS_red_true}. In particular, \[ \Phi_0 = \bigcup_{i=1}^{n} \left( \phi_{\rm fast}^{(i)} \cup \phi_{\rm slow}^{(i)} \right) \cup \phi_{\rm loc}, \] where $n\in\mathbb N$ denotes the number of fast-slow cycles performed by the orbit before it enters the local dynamical regime in a neighborhood of the EE and
\begin{enumerate} 
\item[(i)] $\phi_{\rm fast}^{(i)}$ is a fast excursion of the layer problem \eqref{eq:PTW_SIRS_layer}, leaving $\mathcal{C}_0$ from a small neighborhood of the DFE along a level set of the invariant $\Gamma$, and re-intersecting  $\mathcal{C}_0$ at the point approximated by $S_\infty<1/\mathcal{R}_0$ determined by \eqref{eq:entry_point}; 
\item[(ii)] $\phi_{\rm slow}^{(i)}$ is a slow orbit segment lying both on the attracting and on the repelling branch of $\mathcal{C}_0$, governed by the reduced flow \eqref{eq:lenta_S}; 
\item[(iii)] $\phi_{\rm loc}$ is a local orbit segment in a neighborhood of the EE, organized by the complex-conjugate eigenvalues described in Lemma~\ref{lem:local_stab_EE}. \end{enumerate} The singular orbit $\Phi_0$ forms a front connecting the EE (behind) to the DFE (ahead), and the number of fast excursions is controlled by the entry-exit mechanism of Section~\ref{sec:slow_flow}. \end{proposition}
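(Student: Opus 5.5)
The plan is to assemble the singular orbit $\Phi_0$ explicitly from the pieces already established, working in the reversed coordinate of system \eqref{eq:PTW_SIRS_red_true}, and then to read it backwards to obtain the front for \eqref{eq:SIRS_red_sp_adim}. First I fix the speed. Take any $c>2\sqrt{D(\mathcal{R}_0-1)}$. By Lemma~\ref{lem:local_stab_DFE}, the eigenvalues $\lambda_{1,2}$ at the DFE are then real and positive, and by \eqref{eq:lampm} the fast eigenvalues $\lambda_\pm$ are real for every $S\in[0,1]$, since $c^2-4D\mathcal{R}_0S+4D\geq a>0$. This has two consequences. The quantity $\rho$ in \eqref{eq:G1} is positive, so $G$ is well defined. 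And the unstable manifold of the DFE in the layer problem leaves into $I>0$ without oscillating through negative $I$. Existence of \emph{at least one} such $c$ is then immediate; no matching condition selects $c$ at the singular level, because every admissible $c$ works.

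The first fast segment $\phi_{\rm fast}^{(1)}$ is the branch of the two-dimensional unstable manifold of $(1,0,0)$ for \eqref{eq:PTW_SIRS_layer} that enters $I>0$. Along it $\Gamma$ is conserved and $S$ decreases. By Proposition~\ref{prop:fast_L} with $S_1=1$, it lands on $\mathcal{C}_0$ at $S_\infty^{(1)}=L<1/\mathcal{R}_0$. To confirm convergence to the landing point, note that $\lambda_-<0$ there, so the landing point lies in the attracting part of $\mathcal{C}_0$. I also need that $I$ stays positive along the excursion, meaning the orbit does not cross $I=0$ before $S$ reaches $L$. I would check this by writing the layer flow on the level set of $\Gamma$ as a planar problem in $(S,I)$ and using monotonicity of $S$.

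Next comes the slow segment $\phi_{\rm slow}^{(1)}$. From $S_\infty^{(1)}$ I follow the reduced flow \eqref{eq:sol_lenta_S}. It crosses the non-hyperbolic point $1/\mathcal{R}_0$ at $T_c$, and by the sign analysis of $G$ in Section~\ref{sec:slow_flow} it reaches the exit point $S_E^{(1)}\in(1/\mathcal{R}_0,1)$ at the unique root $T_E>T_c$. By construction this segment lies on both the attracting and the repelling branch, which is item (ii).

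I then iterate. From $S_E^{(i)}$ the layer problem produces $\phi_{\rm fast}^{(i+1)}$ along the level set of $\Gamma$, landing at the second root $S_\infty^{(i+1)}$ of \eqref{eq:entry_point} with $S_1=S_E^{(i)}$. The monotonicity remarks at the end of Section~\ref{sec:slow_flow} show that $S_E^{(i)}<1$ gives $S_\infty^{(i+1)}>L$, and hence $S_E^{(i+1)}<S_E^{(i)}$. So the sequences $S_E^{(i)}\downarrow$ and $S_\infty^{(i)}\uparrow$ are monotone and bounded by $1/\mathcal{R}_0$. The $\Gamma$-values of successive loops decrease towards $\Gamma(1/\mathcal{R}_0,0,0)$, the level of the EE in the limit $\delta\to0$.

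The main obstacle is the termination step: I must show that after finitely many cycles $n$ the orbit enters a fixed neighborhood $U$ of the EE, and that there $\phi_{\rm loc}$ takes over. The plan is to show that the exit-map composition $S_E^{(i)}\mapsto S_E^{(i+1)}$ is continuous with no fixed point in $(1/\mathcal{R}_0,1)$. Once that holds, monotone convergence forces $S_E^{(i)}\to 1/\mathcal{R}_0$. For the absence of fixed points I would use the strict inequality obtained from the entry-exit balance \eqref{eq:exit_time} together with the strict convexity of $f$ in \eqref{eq:entry_point}. Given convergence, some finite $n$ yields $S_E^{(n)}$ and $S_\infty^{(n)}$ inside $U$. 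The final piece $\phi_{\rm loc}$ is then the segment approaching the EE along its stable directions, which by Lemma~\ref{lem:local_stab_EE} form the complex-conjugate pair with negative $\mathcal{O}(\delta)$ real part. This produces the spiralling approach, which is item (iii). A subtlety I would flag is that the EE collapses onto $\mathcal{C}_0$ at $(1/\mathcal{R}_0,0,0)$ as $\delta\to0$, so the neighborhood $U$ must be taken in the rescaled coordinates $(S,v,w)$ of \eqref{eq:PTW_SIRS_resc_loc}.

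Finally, reversing $\xi=-\hat\xi$ turns the concatenation $\Phi_0$ from DFE$\to$EE into a front with the EE behind and the DFE ahead, as stated.
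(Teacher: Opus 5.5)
Your route is the paper's own. The paper also only assembles Lemma~\ref{lem:local_stab_DFE}, Proposition~\ref{prop:fast_L}, the entry--exit relation \eqref{eq:exit_time} with the monotonicity remark closing Section~\ref{sec:slow_flow}, and Lemma~\ref{lem:local_stab_EE}. Your bookkeeping $S_E^{(i)}\downarrow$, $S_\infty^{(i)}\uparrow$ is correct.

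\textbf{First gap: the fast segment.} The unstable manifold $W^u(1,0,0)$ of \eqref{eq:PTW_SIRS_layer} is two-dimensional. It lies in the level surface $\{\Gamma=\Gamma(1,0,0)\}$, which meets $\mathcal C_0$ only at $S=1$ and $S=L$. Eliminating $J$ there gives the planar system $S'=-\frac{\mathcal R_0}{c}SI$, $I'=\frac{c}{D}\bigl(I+f(S)\bigr)$, with $f$ as in \eqref{eq:entry_point} and $S_1=1$. In this system $(1,0)$ is a node, and $(L,0)$ is a \emph{saddle}, because $\lambda_+(L)>0$. So ``$\lambda_-(L)<0$, hence the landing point is attracting'' does not give convergence. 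Of the one-parameter fan of orbits leaving the DFE into $I>0$, only the $I>0$ branch of the saddle's stable manifold lands on $\mathcal C_0$. The others never return: they either exit through $\{I=0,\ L<S<1\}$, where $I'<0$, or pass $S=L$ with $I>0$ and escape. What you must prove is that this separatrix emanates from the node and stays in $I>0$, e.g.\ by a shooting or trapping-region argument between those two behaviours. Monotonicity of $S$ alone cannot give this. It is the traveling-wave problem of the diffusive Kermack--McKendrick model, where a positive connection exists for $c\ge 2\sqrt{D(\mathcal R_0S_1-1)}$ and fails below that. Your remark that the DFE is a node is necessary but purely local. The same argument is needed for every later loop with $S_1=S_E^{(i)}$. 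The paper's Proposition~\ref{prop:fast_L} also takes this for granted.

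\textbf{Second gap: termination.} Strict convexity of $f$ cannot exclude fixed points of $S_E\mapsto S_E'$. The question compares two different pairings across $1/\mathcal R_0$:
\begin{itemize}
\item $F(s_-)=F(s_+)$, with $F(S)=S-\frac{1}{\mathcal R_0}\log S$;
\item $H(s_-)=H(s_+)$, with $H(S)=\int_{1/\mathcal R_0}^S h$ and $h(S)=\bigl(c-\sqrt{c^2+4D(1-\mathcal R_0S)}\bigr)/(1-S)$.
\end{itemize}
Here $h$ is, up to a factor $c$, the integrand of \eqref{eq:exit_time} after the substitution $\mathrm d\zeta=c\,\mathrm dS/(1-S)$. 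What works is the factorization $h=F'g$, with $g(S)=4D\mathcal R_0S/\bigl[(1-S)\bigl(c+\sqrt{c^2+4D(1-\mathcal R_0S)}\bigr)\bigr]$ positive and strictly increasing. This gives $H(s_-)<g(1/\mathcal R_0)\bigl(F(s_\pm)-F(1/\mathcal R_0)\bigr)<H(s_+)$, hence $S_E'<S_E$.

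More seriously, this same argument shows the cycles never stop at $\delta=0$. Since $S_E'>1/\mathcal R_0$ always, the singular orbit is an infinite sequence of loops accumulating at $(1/\mathcal R_0,0,0)$. That is the point into which the EE collapses, and there the layer Jacobian has eigenvalues $0,0,c/D$. The complex pair of Lemma~\ref{lem:local_stab_EE}, with real part $\mathcal O(\delta)$ and imaginary part $\mathcal O(\sqrt\delta)$, vanishes in this limit. So a finite $n$ and a segment $\phi_{\rm loc}$ ``organized by'' that pair are not singular-limit objects.

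The rescaled chart does not repair this. There the EE sits at $v\approx(\mathcal R_0-1)/\mathcal R_0$, on the line $\{S=1/\mathcal R_0,\ w=0\}$ of equilibria of the rescaled layer problem, while the loops you cut off have $v=\mathcal O(1/\delta)$. Item (iii) and the finiteness of $n$ need a $\delta$-dependent cut-off plus a local analysis near $(1/\mathcal R_0,0,0)$ for $\delta>0$. That is the persistence problem the paper explicitly defers, and neither your argument nor the paper's supplies it.
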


\begin{figure}[h!]
    \centering
    \includegraphics[width=1\linewidth]{Figure_qualitative.eps}
    \caption{Schematic representation of the geometric structure of the traveling front made by alternating slow and fast phases. \textbf{(a)} Orbit in the full three dimensional $(S,I,J)$ space; \textbf{(b)} zoom-in close to the EE in the $(S,I)$ plane. Different colors identify the different dynamical regimes: local dynamics (green) near the EE, slow dynamics (blue) close to the critical manifold, and fast outbreak excursions (red) governed by the layer problem.
    \label{fig:qualitative}}
\end{figure}

\begin{remark}
The analysis carried out thus far allows us to define a proper forward invariant set, which provides a more precise region in which the dynamics of system \eqref{eq:PTW_SIRS_red_true} evolves, compared to the biologically relevant region $\Delta$ \eqref{eq:biol_feas}.
Recall Proposition \ref{prop:fast_L} and \eqref{eq:const_of_moti}; we can define the forward invariant set
$$\Omega :=\{ (S,I,J) \in \mathbb{R}^3 \; | \; \Gamma(S,I,J)\leq \Gamma(1,0,0),\; S\geq L,\;I\geq 0,\; S
+I\leq 1\}.$$
\end{remark}

\begin{remark}[Persistence for small $\delta>0$] The construction above is carried out in the singular limit $\delta=0$ and therefore provides a singular front $\Phi_0$ obtained by concatenating fast, slow and local orbit segments. A rigorous persistence result for the corresponding traveling front when $\delta>0$ is beyond the scope of the present work. Indeed, the critical manifold $\mathcal C_0$ is a line of equilibria and loses normal hyperbolicity at $S=1/\mathcal R_0$, so that the standard Fenichel theory does not directly apply to the entire orbit. Nevertheless, the fast segments persist as $\mathcal O(\delta)$ perturbations of the layer dynamics, while the passage near the non-hyperbolic point is quantitatively described by the entry-exit framework developed e.g. in \cite{de2008smoothness,de2016entry}.
\end{remark}

\subsubsection{Slow flow and entry-exit mechanism for system \eqref{eq:PTW_SIRS_red_sp}}

This short subsection is the main reason for our back-and-forth between the detailed analysis of system \eqref{eq:PTW_SIRS_red_true} and system \eqref{eq:PTW_SIRS_red_sp}. Indeed, as we shall show below, orbits of system \eqref{eq:PTW_SIRS_red_sp} might, in principle, exit the biologically relevant region during the slow flow by crossing into the region representing a \emph{negative} susceptible population.

Introducing the same change of coordinates $\delta v:=I$, $\delta w:=J$ in system \eqref{eq:PTW_SIRS_red_sp}, the slow dynamics on $\mathcal{C}_0$ are instead given by
$$
    \dfrac{{\rm d} S}{{\rm d} \zeta}=\dfrac{1}{c} (S-1),
$$
so that 
\begin{equation}\label{eq:S_zeta}
S(\zeta)=1+(S_\infty-1) e^{\zeta/c}.
\end{equation}
This implies that $S\to -\infty$ as $\zeta \to +\infty$. We define the exact values of $\zeta$ for which $S$ crosses into the non-biologically feasible region, namely
\begin{equation}\label{eq:zeta_star}
S(\zeta^*)=0 \iff \zeta^*=-c \log (1-S_\infty).
\end{equation}
Note that the closer $S_\infty$ is to 1, the longer the corresponding orbit remains in the biologically feasible region.

Following a similar construction as in the previous section, we obtain the following entry-exit relation:
$$
\int_{0}^{T_E} \left(-c+\sqrt{c^2 - 4 D \mathcal{R}_0 S(\zeta) + 4 D}\right) \text{d}\zeta=0.
$$
One would naturally hope that, given any $S_\infty \in (1/\mathcal{R}_0,1)$, the corresponding exit time satisfies $T_E<\zeta^*$ \eqref{eq:zeta_star}; however, as we shall show below, this is not always the case.

By introducing, for ease of notation, $a:= c^{2}-4D(\mathcal{R}_0-1)$ and $B:= 4D\mathcal{R}_0(S_\infty-1)<0 $, we can re-write the argument of the square root as $\rho(S(\zeta))=a-B\,e^{\zeta/c}$. Then, the integral equation for the exit time can be rewritten as the non-trivial zero of 
\begin{equation}\label{eq:G}
  G(T):=\int_0^{T}\left(-c+\sqrt{\rho\left(S(\zeta)\right)}\right)\text{d}\zeta
        =\int_0^{T}\left(-c+\sqrt{a-B\,e^{\zeta/c}}\right)\text{d}\zeta .
\end{equation}
From Section \ref{sec:fast_lim} we know that the integrand of \eqref{eq:G} is negative for $S>1/\mathcal{R}_0$ and positive for $S<1/\mathcal{R}_0$; hence, $G(0)=0$, $G$ decreases until
$T=T_c$ (where $S(T_c)=1/\mathcal{R}_0$) and then increases. Since $S(\zeta)\to -\infty$ as $\zeta\to+\infty$, we have that in the same limit $G(T)\to+\infty$. In particular, a zero $T_E$ of $G(T)$ satisfying
$T_E>T_c$ always exists. 

Introducing, for ease of notation, $\omega :=\sqrt{a-B\,e^{\zeta/c}}$, we can find a primitive of \eqref{eq:G} as 
\begin{equation}\label{eq:prim}
  \int\sqrt{a-B\,e^{\zeta/c}}\,\text{d}\zeta
  = c\!\left[\,2\omega+\sqrt a\,\ln\frac{\omega-\sqrt a}{\omega+\sqrt a}\right]
  .
\end{equation}
Evaluating this expression between $0$ and $T$, we obtain
\begin{equation}\label{eq:GT}
  G(T)=-cT+2c\bigl(\omega(T)-\omega(0)\bigr)
       +c\sqrt a\left[\ln\frac{\omega(T)-\sqrt a}{\omega(T)+\sqrt a} -\ln\frac{\omega(0)-\sqrt a}{\omega(0)+\sqrt a}\right].
\end{equation} 
The unique positive root of \eqref{eq:GT} can be found numerically and compared to the simulations of system \eqref{eq:PTW_SIRS_red_sp}. The expression is hardly treatable analytically, except for the following considerations.

One condition to ensure that the exit time corresponds to a positive exit value of $S$ is
$$
G(\zeta^*)>0
  \;\Longrightarrow\;
  T_E<\zeta^*
  \;\Longrightarrow\;
  S(T_E)>S(\zeta^*)=0 .$$
However, as we shall see in the following proposition, this is not always the case for any $S_\infty \in (1/\mathcal{R}_0,1)$. 

\begin{proposition}\label{prop:S_inftystar}
Assume that $\mathcal{R}_0>1$, $c,D>0$ and $a=c^{2}-4D(\mathcal{R}_0-1)>0$. Then, there exists a unique
$S_\infty^*\in(1/\mathcal{R}_0,1)$ such that
\[
  G(\zeta^*)>0 \iff 0<S_\infty<S_\infty^*,
  \qquad
  G(\zeta^*)<0 \iff S_\infty^*<S_\infty<1 .
\]
The inequality $G(\zeta^*)>0$ does not hold on the whole interval
$1/\mathcal{R}_0<S_\infty<1$, but only in the sub-interval $1/\mathcal{R}_0<S_\infty<S_\infty^*$.
\end{proposition}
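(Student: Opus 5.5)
The plan is to turn $G(\zeta^*)$ into a function of the single variable $s:=S_\infty$ and to study it by monotonicity, rather than working with the explicit primitive \eqref{eq:GT}. Set $H(s):=G(\zeta^*(s))$, with $\zeta^*(s)=-c\log(1-s)$ as in \eqref{eq:zeta_star}. Instead of substituting into \eqref{eq:GT}, I will change the integration variable in \eqref{eq:G} from $\zeta$ to $S$, using the explicit slow solution \eqref{eq:S_zeta}.

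\textbf{Step 1 (change of variables).} Along \eqref{eq:S_zeta} we have $\mathrm{d}S/\mathrm{d}\zeta=(S-1)/c$, so $\mathrm{d}\zeta=c\,\mathrm{d}S/(S-1)$. As $\zeta$ runs from $0$ to $\zeta^*$, $S$ runs monotonically from $s$ down to $0$. Writing
\[
g(S):=\sqrt{c^2+4D-4D\mathcal{R}_0 S}-c ,
\]
this gives
\[
H(s)=\int_s^0 g(S)\,\frac{c}{S-1}\,\mathrm{d}S = c\int_0^s \frac{g(S)}{1-S}\,\mathrm{d}S .
\]
The hypothesis $a>0$ enters here. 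For $S\in[0,1]$ the radicand satisfies $c^2+4D-4D\mathcal{R}_0S\ge a>0$, so $g$ is real and smooth on $[0,1]$, and $H$ is $C^1$ on $[0,1)$. As a consistency check, the same $H$ follows from \eqref{eq:GT}: at $\zeta^*$ one has $Be^{\zeta^*/c}=-4D\mathcal{R}_0$, so $\omega(\zeta^*)=\sqrt{c^2+4D}$ does not depend on $s$.

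\textbf{Step 2 (sign structure).} Since $H'(s)=c\,g(s)/(1-s)$, the sign of $H'$ is the sign of $g$. This is exactly the sign of $\Re(\lambda_+)$-type quantity already discussed in Section~\ref{sec:fast_lim}: $g(S)>0$ if and only if $S<1/\mathcal{R}_0$. Together with $H(0)=0$, this gives:
\begin{itemize}
\item $H$ is strictly increasing on $(0,1/\mathcal{R}_0)$, hence $H>0$ on $(0,1/\mathcal{R}_0]$;
\item $H$ is strictly decreasing on $(1/\mathcal{R}_0,1)$.
\end{itemize}

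\textbf{Step 3 (behaviour as $s\to1^-$).} Near $S=1$ we have $g(S)\to g(1)=\sqrt a-c<0$. Here $\sqrt a<c$ because $a=c^2-4D(\mathcal{R}_0-1)$ and $\mathcal{R}_0>1$. Hence the integrand behaves like $(\sqrt a-c)/(1-S)$, which is not integrable at $1$, and so $H(s)\to-\infty$ as $s\to1^-$.

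\textbf{Step 4 (conclusion).} By continuity and strict monotonicity on $(1/\mathcal{R}_0,1)$, there is a unique $S_\infty^*\in(1/\mathcal{R}_0,1)$ with $H(S_\infty^*)=0$. It follows that $H>0$ on $(0,S_\infty^*)$ and $H<0$ on $(S_\infty^*,1)$, which is the claimed dichotomy. The failure of $G(\zeta^*)>0$ on all of $(1/\mathcal{R}_0,1)$ is then immediate.

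\textbf{Main obstacle.} I expect the only delicate point to be handling the endpoint $s\to1$, where $\zeta^*\to\infty$. This needs the non-integrability estimate of Step 3 with the sign fixed by $\sqrt a<c$. I would make it rigorous by bounding $g(S)\le(\sqrt a-c)/2$ on a left neighbourhood of $1$. Everything else reduces to reading off signs of $g$, which are already available from Section~\ref{sec:fast_lim}.
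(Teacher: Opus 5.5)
Your proposal is correct and follows the paper's proof essentially step for step. Both use the substitution $\zeta\mapsto S$ along the explicit slow solution to rewrite $G(\zeta^*)$ as $c\int_0^{S_\infty} g(S)/(1-S)\,\mathrm{d}S$ (the paper's $c\,\Phi(S_\infty)$), the sign change of the integrand at $S=1/\mathcal{R}_0$, and the non-integrable singularity $(\sqrt a-c)/(1-S)$ at $S=1$. One cosmetic slip: $g=-2D\lambda_-$, so its sign is governed by $\lambda_-$ rather than $\lambda_+$ (which is always positive), although your claim $g>0\iff S<1/\mathcal{R}_0$ is itself correct.
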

\begin{proof}
It is convenient to rewrite $G(\zeta^*)$ with the change of variable
$\zeta\mapsto S$. From \eqref{eq:S_zeta} and/or \eqref{eq:zeta_star} we have
$\text{d}\zeta=\dfrac{c}{S-1}\text{d}S$ and the integration interval
$\zeta\in[0,\zeta^*]$ corresponds to $S\in[0,S_\infty]$, from which we can write
\begin{equation}\label{eq:GPhi}
  \frac{G(\zeta^*)}{c}
  =\Phi(S_\infty):=\int_0^{S_\infty} m(S)\,\text{d} S,
  \quad \hbox{with} \quad
  m(S):=\frac{\sqrt{c^{2}+4D(1-\mathcal{R}_0 S)}-c}{1-S}.
\end{equation}
On $[0,1)$ we have $1-S>0$, hence the sign of $m(S)$ corresponds to the sign of
$\sqrt{c^2+4D(1-\mathcal{R}_0 S)}-c$, which is the same as $(1-\mathcal{R}_0 S)$. Hence,

$$
  m(S)>0 \text{ for } S \in (0,1/\mathcal{R}_0),
  \qquad
  m(S)<0 \text{ for } S \in (1/\mathcal{R}_0,1).
$$
This implies that $\Phi$ is strictly increasing on $[0,1/\mathcal{R}_0]$ and strictly decreasing on $[1/\mathcal{R}_0,1)$, with $\Phi(0)=0$ and
$\Phi(1/\mathcal{R}_0)>0$. Lastly, in the limit $S\to1^-$, we observe

$$
  m(S)\sim\frac{\sqrt a-c}{1-S},
$$
and $\sqrt a<c$ implies $\Phi(S_\infty)\to-\infty$ as $S_\infty\to1^-$. In summary, $\Phi(S)$ is a continuous function satisfying $\Phi(1/\mathcal{R}_0)>0$; for values of $S>1/\mathcal{R}_0$, it decreases and tends to $-\infty$ as $S\to 1^-$. This implies that $\Phi(S)$ has a unique root $S_\infty^*\in(1/\mathcal{R}_0,1)$. We conclude by recalling that $G(\zeta^*)=c \,\Phi(S_\infty)$.
\end{proof}

\subsection{Numerical validation}
\label{sec:num_valid}

In this section, we compare the analytical results obtained in Sections~\ref{sec:lin_stab}-\ref{sec:slow_flow} with the class of numerical simulations of system \eqref{eq:SIRS_red_sp_adim} introduced in Section~\ref{sec:prelim}. Our goal is to provide a geometric and quantitative validation of the slow-fast structure underlying the observed traveling front solutions depicted in Figure~\ref{fig:numerical_simulations}.

\begin{figure}[t!]
    \centering
    \includegraphics[width=1\linewidth]{Figure2.eps}
    \caption{Multiscale structure of the traveling front solution obtained from numerical simulations of system \eqref{eq:SIRS_red_sp_adim}. The colors distinguish the different dynamical regimes, with local (green curves), fast (red), slow (blue)  dynamics corresponding to distinct portions of the orbit. {\bf(a)} Spatial profiles at $t = 6000$, highlighting the coexistence of different spatial scales: a local regime near the equilibrium, a slow evolution region, and a sharp fast transition layer. {\bf(b)} Corresponding trajectory in the phase space $(S,I,J)$, illustrating the concatenation of slow motion near the critical manifold with fast excursions governed by the layer dynamics. {\bf(c)} Projection of the orbit in the $(S,I)$ plane, showing the transition between slow and fast dynamics. {\bf(d)} Zoom of the orbit near the equilibrium, emphasizing the transition from the local regime to the fast dynamics. Parameter values as in Figure \ref{fig:numerical_simulations}.}
    \label{fig:scales}
\end{figure}

Let us begin by illustrating the global organization of the dynamics through Figure \ref{fig:scales}. This figure provides a direct numerical visualization of the multiscale structure predicted by the analysis in Section \ref{sec:analysis}, where the evolution of the solution was decomposed into local, fast, and slow regimes. 

Panel \ref{fig:scales}\textbf{(a)} shows the spatial profile of the traveling front at a fixed final time\linebreak ($t=6000$). Three distinct regions can be clearly identified: a local regime in a neighborhood of the EE (green curves), where the dynamics are governed by the linearization described in Section \ref{sec:lin_stab}; a sharp fast transition layer (red curves), corresponding to the excursion governed by the layer problem analyzed in Section \ref{sec:fast_lim}; and a slow evolution region (blue curves), in which the solution remains close to the critical manifold $\mathcal{C}_0$ and evolves according to the reduced dynamics derived in Section \ref{sec:slow_flow}. From a biological perspective, this decomposition reflects the coexistence of different mechanisms driving the spatial spread of the infection. Moving from left to right along the front, one first observes a regime in a neighborhood of the EE, where both susceptible and infected populations are at positive levels and the infection is well established. This regime is followed by a sequence of slow segments and fast excursions that govern the invasion process. In detail, along the slow segments the dynamics correspond to a gradual adaptation of the susceptible population as the infection starts to build up. These regions are interspersed with fast transitions, which represent localized outbreak events: the number of infected individuals rapidly increases, reaches a peak, and subsequently decreases, while the susceptible population simultaneously recovers. This repeated interplay between slow adjustment and fast outbreak dynamics provides a coherent biological interpretation of the multiscale structure of the traveling front. As one moves further along the spatial domain, the solution reaches a region in which the infection has completely vanished and the population consists entirely of susceptible individuals. It should be noted that the front propagates with positive migration speed, hence traveling to the right, and can therefore be interpreted as an invasion front in which the EE persists behind the front, while the DFE is ahead of it.

Then, panel \ref{fig:scales}\textbf{(b)} displays the corresponding trajectory in the full phase space $(S,I,J)$. The orbit exhibits a clear concatenation of slow and fast segments, in agreement with the geometric decomposition introduced in Section \ref{sec:analysis}. In particular, the solution evolves close to the critical manifold $\mathcal{C}_0=\{J=I=0\}$ for a relatively long portion of the dynamics, before undergoing a fast excursion that brings it away from the manifold and subsequently back towards it. 

This behavior is more clearly visible in panel \ref{fig:scales}\textbf{(c)}, where the orbit is represented in a zoomed-in portion of the $(S,I)$ plane. The transition between slow and fast dynamics appears as a sharp change in direction of the trajectory, providing numerical evidence of the entry-exit scenario described in Section~\ref{sec:slow_flow}. 

Finally, panel \ref{fig:scales}\textbf{(d)} further zooms into the neighborhood of the equilibrium, showing how the local dynamics smoothly connect with the fast layer, thus completing the geometric structure of the orbit. We remark that panels \ref{fig:scales}\textbf{(c)} and \ref{fig:scales}\textbf{(d)} closely resemble the qualitative structure of the orbit depicted in the sketch of Figure~\ref{fig:sketch}, thereby providing a direct numerical counterpart to the geometric scenario derived in Section~\ref{sec:analysis}. 

Overall, Figure~\ref{fig:scales} confirms that the traveling front solution is organized as a concatenation of distinct dynamical regimes, in full agreement with the slow-fast framework developed in Section~\ref{sec:analysis}.

\begin{figure}[b!]
    \centering
    \includegraphics[width=1\linewidth]{Figure3.eps}
    \caption{Numerical validation of the entry-exit mechanism underlying the slow-fast dynamics of system \eqref{eq:SIRS_red_sp_adim}. {\bf(a)} Projection of the orbit in the $(S,I)$ plane, highlighting the transition between slow and fast dynamics. Red crosses denote numerically detected entry points into the slow manifold (transition from fast to slow dynamics), while blue crosses indicate exit points from the slow manifold (transition from slow to fast dynamics). {\bf(b)} Zoom of the entry dynamics, where blue circles mark the numerical prediction of the entry point location predicted by \eqref{eq:entry_point}. {\bf(c)} Comparison between the exit time $T_E$ obtained from the integral condition \eqref{eq:exit_time} and the spatial extent $\Delta x$ of the corresponding excursion along the slow critical manifold, providing a quantitative link between the analytical prediction and the numerical trajectory. {\bf(d)} Ratio between $T_E$ and $\Delta x$, exhibiting a scaling law proportional to $O(\delta)$. Parameter values as in Figure \ref{fig:numerical_simulations}.} 
    \label{fig:entryexit_validation}
\end{figure}

Let us now move to a more quantitative validation of the (fast) constant of motion and (slow) entry-exit mechanism described in Sections \ref{sec:fast_lim}-\ref{sec:slow_flow}, which governs the transition between slow and fast dynamics. This mechanism plays a central role in determining how trajectories approach and leave a neighborhood of the critical manifold, and ultimately controls the multiscale structure of the traveling front. This behavior is illustrated in Figure \ref{fig:entryexit_validation}. 

In particular, panel \ref{fig:entryexit_validation}\textbf{(a)} shows the trajectory in the $(S,I)$ plane, where the transition between slow and fast regimes is clearly visible. Red crosses denote the numerically detected entry points into the slow dynamics, corresponding to transitions from fast to slow motion, while blue crosses identify the exit points, marking the transition from slow motion back to fast excursions. More precisely, we exploit the fact that the slow dynamics take place in an $\mathcal{O}(\delta)$ neighborhood of the critical manifold $\mathcal{C}_0$. In order to numerically separate the slow and fast regimes, we consider the dynamics in the full phase space $(S,I,J)$ and introduce a cylindrical neighborhood of radius $\delta$ centered around $\mathcal{C}_0$. Therefore, numerical entry and exit points are identified as the intersections of the trajectories with this cylinder. Consequently, the red and blue markers displayed in panel \ref{fig:entryexit_validation}\textbf{(a)} correspond to the projections of these intersection points onto the $(S,I)$ plane. The observed structure is in  excellent agreement with the geometric picture provided by the analysis of the layer problem and the reduced flow shown in Figure \ref{fig:sketch}. 

Then, a more detailed view of the entry dynamics is provided in panel \ref{fig:entryexit_validation}\textbf{(b)}, where we display a zoom of the trajectory near the slow manifold. The blue circles represent the theoretical prediction of the entry location, obtained from the relation derived in \eqref{eq:entry_point}. The agreement is quite good for all the excursions, consistent with the fact that our analytical description approximates the transition between fast and slow motion. Indeed, in the numerical procedure we identify entry and exit points as intersections with a cylinder of radius exactly $\delta$ around the critical manifold $\mathcal{C}_0$, whereas the theoretical analysis only predicts that such transitions occur at a distance of order $\mathcal{O}(\delta)$ from $\mathcal{C}_0$. As a consequence, one expects the presence of a transitional layer within this neighborhood $\mathcal{O}(\delta)$-close to $\mathcal{C}_0$ in which the dynamics gradually switch from fast to slow motion, and vice versa. Therefore, the accumulation of this approximation error becomes more visible for some trajectories undergoing excursions around the EE.

Moreover, panel \ref{fig:entryexit_validation}\textbf{(c)} shows a comparison between the exit time $T_E$, computed as the solution of the integral condition \eqref{eq:exit_time}, and the spatial extent $\Delta x$ of the corresponding permanence of the orbit close to $\mathcal{C}_0$. These quantities provide a quantitative measure of how closely the trajectory follows the critical manifold $\mathcal{C}_0$, comparing the analytical prediction with the numerically observed dynamics along the slow excursion. 

Finally, panel \ref{fig:entryexit_validation}\textbf{(d)} displays the ratio between $T_E$ and $\Delta x$. This ratio is closer to $\delta$ the longer the corresponding slow part of the orbit is - that is, the clearer the separation between slow flow and local dynamics around the EE is. As the orbit performs successive excursions around the EE, thereby entering a regime in which the slow-fast structure  slowly loses influence in favor of the local dynamics, this ratio progressively becomes smaller until the local dynamics completely dictate the behavior of the system and the orbit does not enter a small enough neighborhood of the critical manifold anymore.
From a biological perspective, this analysis indicates that the duration of the slow adaptation phase and the spatial extension of the corresponding region are directly controlled by the rate of loss of immunity. Specifically, as $\delta$ decreases, the system remains longer in a regime of gradual adjustment close to the slow manifold, resulting in wider regions separating successive outbreak events.

\begin{figure}[t!]
    \centering
    \includegraphics[width=1\linewidth]{Figure4.eps}
    \caption{Numerical validation of the invariant quantity $\Gamma$ introduced in Section \ref{sec:fast_lim}. \textbf{(a)} Value of $\Gamma(S(\xi),I(\xi),J(\xi))$ along the traveling front profile. \textbf{(b)}, \textbf{(c)} Orbit in the $(S,I,J)$ phase space together with representative level sets of $\Gamma$ corresponding to two distinct exit points. Parameter values as in Figure \ref{fig:numerical_simulations}.}
    \label{fig:fast_first_integral}
\end{figure}

Let us now turn our attention to the geometric structure of the fast dynamics, as characterized by the invariant quantity $\Gamma$ introduced in Section \ref{sec:fast_lim}. Recall that $\Gamma$ is an exact first integral of the layer problem and therefore provides a natural geometric framework for describing the fast excursions away from the critical manifold. The numerical validation of this mechanism is reported in Figure \ref{fig:fast_first_integral}. 

In detail, panel \ref{fig:fast_first_integral}\textbf{(a)} shows the quantity $\Gamma(S(\xi),I(\xi),J(\xi))$ evaluated along the numerical traveling front. As expected, $\Gamma$ remains nearly constant during the fast excursions (red arcs), while it varies during the slow phases close to the critical manifold (blue arcs). This behavior appears as a sequence of almost flat plateaus separated by gradual transitions. The inset, which highlights the outbreak region, further emphasizes that each rapid epidemic excursion takes place with only minimal variation of $\Gamma$. This provides direct numerical evidence that the invariant structure of the layer problem continues to organize the fast dynamics of the full system, even for small but finite values of $\delta$. 

To further illustrate this geometric organization, panels \ref{fig:fast_first_integral}\textbf{(b)} and \ref{fig:fast_first_integral}\textbf{(c)} display the trajectory in the $(S,I,J)$ phase space together with selected level sets of $\Gamma$, corresponding to two distinct exit points associated with the last two fast excursions of the orbit. In both cases, the trajectory closely follows the corresponding level set during the fast phase, confirming that the motion is well approximated by the geometry induced by $\Gamma$. Moreover, the comparison between panels \ref{fig:fast_first_integral}\textbf{(b)} and \ref{fig:fast_first_integral}\textbf{(c)} highlights the robustness of this behavior across different excursions: although the location of the exit point changes, the associated fast dynamics remain organized by the same invariant structure. This confirms that the excursions observed in the full system can be interpreted as fast motions connecting different points of the critical manifold along level sets of $\Gamma$. From a biological perspective, these fast excursions correspond to rapid outbreak events, during which the infected population quickly rises and subsequently declines while the susceptible population undergoes a significant variation. The fact that these outbreak phases occur along trajectories constrained by an underlying invariant quantity indicates that they are not arbitrary transient events, but rather manifestations of a well-defined geometric mechanism governing the fast epidemic dynamics. 

Finally, we emphasize that the above description is derived from a singular analysis, and is therefore rigorously valid in the limit $\delta \to 0$. The numerical results show that this geometric structure persists for small but finite values of $\delta$, and remains remarkably accurate even for $\delta \approx 10^{-3}$, providing strong evidence of the robustness of the fast dynamics with respect to the singular limit. As expected, the agreement gradually deteriorates as $\delta$ increases, reflecting the asymptotic nature of the approximation; nevertheless, for the parameter values considered here, the agreement is excellent.

\section{Discussion}
\label{sec:discussion}

In this work, we investigated the emergence and geometric structure of traveling invasion fronts in a spatial SIRS epidemic model with slow loss of immunity and diffusion acting only on the infected population. Starting from the reaction-diffusion system introduced in Section~\ref{sec:model}, numerical simulations revealed the existence of coherent traveling fronts connecting the EE to the DFE, propagating at an asymptotically constant speed (Figure~\ref{fig:numerical_simulations}). A key observation was the presence of a pronounced separation of scales in the spatial profile: abrupt variations in the infected compartment coexist with much slower changes in the susceptible population. This multiscale structure provided the main motivation for the geometric analysis carried out in Section~\ref{sec:analysis}.

After reducing the PDE model to the traveling-wave system~\eqref{eq:PTW_SIRS_red_sp}, we exploited the small immunity-loss parameter $\delta$ to formulate the problem within the framework of GSPT. For analytical convenience, we reversed the traveling-wave coordinate, thereby inverting the orientation of the orbit. As a consequence, the singular orbit was constructed and analyzed in backward wave coordinates, while still corresponding to a traveling front connecting the EE and DFE in the original spatial variable. Within this framework, the local analysis near the DFE and EE (Section~\ref{sec:lin_stab}) identified the relevant eigendirections and clarified the asymptotic behavior observed at both ends of the front. In particular, Lemma~\ref{lem:local_stab_DFE} characterizes the unstable manifold of the DFE in the reversed formulation, whereas Lemma~\ref{lem:local_stab_EE} shows that the EE possesses a pair of weakly stable complex eigenvalues, explaining the damped oscillations observed numerically in the tail of the traveling profile.

The singular limit $\delta \to 0$ revealed a natural decomposition of the dynamics into fast and slow regimes. The layer problem analyzed in Section~\ref{sec:fast_lim} governs rapid epidemic outbreaks and admits the invariant quantity $\Gamma$, which geometrically organises the fast excursions away from the critical manifold. Specifically, Proposition~\ref{prop:fast_L} provides a quantitative characterization of the return mechanism generated by these fast dynamics, identifying the smallest admissible value of the susceptible population reached during an outbreak. On the other hand, the reduced problem on the critical manifold describes the gradual flow back of susceptibles due to the slow loss of immunity. The transition between these regimes is regulated by an entry-exit mechanism, whose quantitative description is given in Section \ref{sec:slow_flow}. Taken together, these results yield the singular geometric skeleton of the traveling front described in Section~\ref{sec:singular_full}: a concatenation of fast outbreak episodes, slow recovery phases, and local dynamics near the EE.

The numerical simulations presented in Section~\ref{sec:num_valid} strongly support this geometric interpretation. The decomposition predicted by the singular analysis is clearly visible in the traveling-front profile and in phase space (Figure~\ref{fig:scales}), where the orbit appears as an alternating sequence of slow drifts and fast excursions. Moreover, the numerical entry and exit points agree remarkably well with the predictions obtained from the invariant $\Gamma$ and from the entry-exit relation (Figure~\ref{fig:entryexit_validation}), while the fast portions of the orbit remain closely aligned with the level sets of $\Gamma$ (Figure~\ref{fig:fast_first_integral}). Overall, these computations indicate that the singular geometric structure persists for small but biologically relevant values of $\delta$, well beyond the singular limit in which it was derived.

From a biological perspective, the traveling front can be interpreted as a sequence of epidemic bursts separated by long periods of apparent quiescence. During the fast phases, infection spreads rapidly through the population, generating localized outbreak events. In contrast, during the slow phases, the infected population remains close to zero while immunity gradually wanes and susceptible individuals accumulate. The entry-exit mechanism quantifies precisely how long the system can remain close to this absence of infection manifold before a new outbreak is triggered. Consequently, the slow immunity-loss rate controls not only the temporal separation between epidemic episodes, but also the spatial distance between successive outbreak regions along the traveling front. In this sense, the traveling wave is not merely a spatial propagation phenomenon; rather, it represents the spatial manifestation of the interplay between rapid transmission processes and slow demographic replenishment of susceptible individuals.

It is worth noting that, although the present heterogeneous setting differs substantially from the spatially homogeneous SIRS model analyzed in \cite{jardon2021geometric}, the underlying slow-fast mechanisms remain closely related. Indeed, after introducing the traveling-wave coordinate and reversing its orientation for analytical convenience, the traveling-front problem can be reformulated in a way that allows geometric tools developed for finite-dimensional epidemic systems to be employed in the present context. While the diffusion term determines the existence and propagation of the front, the alternation between fast outbreak excursions and slow recovery phases is dictated by the underlying kinetics, namely the slow loss of immunity encoded by $\delta$. As a consequence, the traveling-wave orbit admits a decomposition into fast outbreak excursions and slow recovery phases, and the invariant-based description of the fast dynamics together with the entry-exit mechanism of Section~\ref{sec:analysis} can be viewed as spatial manifestations of the slow-fast epidemic structure previously identified in homogeneous SIRS models.

Several extensions of the present work appear worth pursuing. A first direction is to enrich the spatial coupling by allowing susceptible individuals to diffuse as well (either through the standard Laplacian, as in \cite{chang2022sparse,wu2021periodic}, or through cross-diffusion, as in \cite{ahmadpoortorkamani2025spatiotemporal,li2025positive}). A second extension concerns the introduction of additional spatial processes acting on faster scales. Motivated by epidemic models with lines of fast diffusion \cite{berestycki2020propagation} or environmental pathogen reservoirs \cite{pang2019sis}, one could couple the present SIRS dynamics to a rapidly spreading component, such as a free-virus compartment or a reservoir host population. This would combine temporal and spatial scale separations, potentially generating richer front geometries and new propagation regimes. Incorporating demographic turnover would introduce a further layer of complexity: if births and deaths occur on a slower timescale than immunity waning, the resulting model would naturally involve three distinct temporal scales and provide a framework in which to study the interplay between epidemic outbreaks, immunity loss, and demographic renewal \cite{kaklamanos2024geometric,kuehn2015multiple}. Another natural extension would be to investigate discontinuous traveling waves and singular heteroclinic connections. The emergence or disappearance of such global structures may induce abrupt transitions between qualitatively different epidemic propagation regimes, leading to sudden reorganizations of the dynamics \cite{li2021shock, Grifo2025II, Grifo2026}. Finally, a deeper analysis of the oscillatory behavior near the EE would be highly desirable. As discussed in Section~\ref{sec:lin_stab}, the eigenvalues associated with the EE may transition from real to complex conjugate pairs, a mechanism related to Belyakov-type transitions in other traveling-wave problems \cite{carter2015fast}. Clarifying their role in the present epidemic context could shed further light on the fine structure of the traveling profile and on the origin of epidemic invasion fronts.

\;\\ \\
\noindent\textbf{Acknowledgments.} All the authors are members and acknowledge the support of {\it Gruppo Nazionale di Fisica Matematica} (GNFM) of {\it Istituto Nazionale di Alta Matematica} (INdAM). Rossella Della Marca and Mattia Sensi acknowledge in particular the support of GNFM through the {\it Progetto Giovani GNFM 2025} ``Analisi geometrica della diffusione spaziale di epidemie su più scale temporali'' (CUP E5324001950001). Gabriele Grif\`o acknowledges the INdAM for financial support through a postdoctoral fellowship funded by the European Union – NextGenerationEU (CUP E63C25000470007). Mattia Sensi acknowledges the support of Fondazione Caritro (Cassa di Risparmio di Trento e Rovereto) through the Bando Post-Doc 2024 project ``Modelli matematici di malattie infettive più ospiti e popolazioni eterogenee: applicazioni all’influenza aviaria''.

\appendix

{\footnotesize
	\bibliographystyle{plain}
	\bibliography{biblio}
}

\end{document}